%
\documentclass[11pt]{amsart}
\textwidth=17cm
\hoffset=-2cm
\usepackage{tikz-cd}
\usepackage{hhline}
\usepackage{comment}
\usepackage{tikz}
\usepackage[page]{appendix}
\usepackage[utf8]{inputenc}
\usepackage{filecontents}
\usepackage{graphicx} 
\usepackage{mathtools}
\usepackage{etoolbox}
\usepackage{amsmath}
\usepackage{amsthm}
\usepackage{amsfonts}
\usepackage{amssymb}
\usepackage{mathrsfs}
\usepackage{xypic}
\usepackage{color}
\usepackage{xcolor}
\usepackage{titlesec}
\usepackage{titletoc}
\usepackage{tabularx}
\usepackage{pgfplots}
\usepackage{subcaption}
\usepackage{listings}
\usepackage{kbordermatrix}

\definecolor{codegreen}{rgb}{0,0.6,0}
\definecolor{codegray}{rgb}{0.5,0.5,0.5}
\definecolor{codepurple}{rgb}{0.58,0,0.82}
\definecolor{backcolour}{rgb}{0.95,0.95,0.92}
\lstdefinestyle{mystyle}{
    backgroundcolor=\color{backcolour},   
    commentstyle=\color{codegreen},
    keywordstyle=\color{magenta},
    numberstyle=\tiny\color{codegray},
    stringstyle=\color{codepurple},
    basicstyle=\ttfamily\scriptsize,
    breakatwhitespace=false,         
    breaklines=true,                 
    captionpos=b,                    
    keepspaces=true,                 
    numbers=left,                    
    numbersep=5pt,                  
    showspaces=false,                
    showstringspaces=false,
    showtabs=false,                  
    tabsize=2
}
 
\lstset{style=mystyle}

\usepackage{mathabx,graphicx}

\pgfplotsset{ticks=none}

\usepackage[colorlinks, linktocpage]{hyperref}
\definecolor{background}{RGB}{204,232,207}

\newtheorem{thm}{Theorem}

\usepackage[style=alphabetic,firstinits,backend=bibtex, uniquename=init,
url=true,hyperref]{biblatex}
\DeclareFieldFormat{labelalpha}{\thefield{entrykey}}
\DeclareFieldFormat{extraalpha}{}
\DeclareFieldFormat{postnote}{#1}
\DeclareFieldFormat{multipostnote}{#1}
\usepackage[colorlinks]{hyperref}
\definecolor{cclr}{rgb}{25,25,112}
\hypersetup{
citecolor=[rgb]{0.15, 0.15, 0.68},
linkcolor=gray,   
urlcolor=magenta}

\begin{filecontents*}{lyndon-g2.bib}
@article {L23,
      title={A Deligne-Lusztig type correspondence for tame $p$-adic groups}, 
      author={Zhongyipan Lin},
      year={2023},
      eprint={2306.02093},
      archivePrefix={arXiv},
      primaryClass={math.NT}
}
@article {L23B,
      title={The Emerton-Gee stacks for tame groups, I}, 
      author={Zhongyipan Lin},
      year={2023},
      eprint={2304.05317},
      archivePrefix={arXiv},
      primaryClass={math.NT}
}
@article {L23C,
    author = {Lin, Zhongyipan},
    title = {Heisenberg varieties and the existence of de Rham lifts},
      year={2023},
    url = {sharkoko.space}
}
@article {L23D,
      title={The Emerton-Gee stacks for tame groups, II}, 
      author={Zhongyipan Lin},
      year={2023},
      eprint={2309.05773},
      archivePrefix={arXiv},
      primaryClass={math.NT}
}

@article{L21,
title={Extensions of crystalline representations valued in general reductive groups},
author={Z. Lin},
year={2019},
url={https://sharkoko.space/pdf/unobs.pdf}
}
@article{BG19,
  title={G-valued local deformation rings and global lifts},
  author={Rebecca Bellovin and T. Gee},
  journal={Algebra and Number Theory},
  volume={13},
  pages={333-378},
  year={2019}
}
@article {B03,
    AUTHOR = {B\"{o}ckle, Gebhard},
     TITLE = {Lifting mod {$p$} representations to characteristics {$p^2$}},
   JOURNAL = {J. Number Theory},
  FJOURNAL = {Journal of Number Theory},
    VOLUME = {101},
      YEAR = {2003},
    NUMBER = {2},
     PAGES = {310--337},
      ISSN = {0022-314X},
   MRCLASS = {11F80},
  MRNUMBER = {1989890},
MRREVIEWER = {Chandrashekhar Khare},
       DOI = {10.1016/S0022-314X(03)00058-1},
       URL = {https://doi.org/10.1016/S0022-314X(03)00058-1},
}

@article {CL11,
    AUTHOR = {Caruso, Xavier and Liu, Tong},
     TITLE = {Some bounds for ramification of {$p^n$}-torsion semi-stable
              representations},
   JOURNAL = {J. Algebra},
  FJOURNAL = {Journal of Algebra},
    VOLUME = {325},
      YEAR = {2011},
     PAGES = {70--96},
      ISSN = {0021-8693},
   MRCLASS = {11F85 (11F80 11S15 14F30)},
  MRNUMBER = {2745530},
MRREVIEWER = {Francesc Castell\`a},
       DOI = {10.1016/j.jalgebra.2010.10.005},
       URL = {https://doi.org/10.1016/j.jalgebra.2010.10.005},
}

@article {GHS18,
    AUTHOR = {Gee, Toby and Herzig, Florian and Savitt, David},
     TITLE = {General {S}erre weight conjectures},
   JOURNAL = {J. Eur. Math. Soc. (JEMS)},
  FJOURNAL = {Journal of the European Mathematical Society (JEMS)},
    VOLUME = {20},
      YEAR = {2018},
    NUMBER = {12},
     PAGES = {2859--2949},
      ISSN = {1435-9855},
   MRCLASS = {11F80 (11F75)},
  MRNUMBER = {3871496},
       DOI = {10.4171/JEMS/826},
       URL = {https://doi.org/10.4171/JEMS/826},
}

@misc{NCS18,
    title={Lifting irreducible Galois representations},
    author={Najmuddin Fakhruddin and Chandrashekhar Khare and Stefan Patrikis},
    year={2018},
    eprint={1810.05803},
    archivePrefix={arXiv},
    primaryClass={math.NT}
}

    @article{Se98,
      author = {J.-P. Serre},
      title = {Moursund Lectures 1998},
      year = {1998},
      url = {https://arxiv.org/abs/math/0305257},
      journal = {arXiv},
    },

@article{L22,
        author = {Lin, Zhongyipan},
        title = {A variant of Steinberg-Winter theorem and crystalline lifts},
        year={2022},
        journal={Doc. Math.},
        pages={2441-2468},
        vol={27}
},

    @article{Mu13,
        author = {A. Muller},
        year = {2013},
        journal = {Universit\'e de Strasbourg},
        title = {Rel\`evements cristallins de repr\'esentations galoisiennes}
    },

@article{EG23,
    author={Mathew Emerton and Toby Gee},
    title = {Moduli stack of \'etale $(\phi, \Gamma)$-modules,
    and the existence of crystalline lifts},
},
@article{GHLS,
    author = {Toby Gee and Florian Herzig and Tong Liu and David Savitt},
    title = {Potentially crystalline lifts of certain prescribed types},
    journal = {Documenta},
    year = {2017},
    volume = {22},
    pages = {391-422},
},

@article{Ly50,
    author = {Lyndon, Roger},
    title = {Cohomology Theory of Groups with a Single Defining Relation},
    journal = {Annals of Math.},
    volume = {52},
    number = {3},
    year = {1950},
    pages = {650-665},
},
@book {Ko97, 
    author = {Koch, Helmut},
    title = {Algebraic Number Theory},
    publisher = {Springer-Verlag Berlin Heidelberg},
    year = {1997},
    ISBN = {978-3-540-63003-6},
    DOI = {10.1007/978-3-642-58095-6},
}

@book {Ko02,
    AUTHOR = {Koch, Helmut},
     TITLE = {Galois theory of {$p$}-extensions},
    SERIES = {Springer Monographs in Mathematics},
      NOTE = {With a foreword by I. R. Shafarevich,
              Translated from the 1970 German original by Franz Lemmermeyer,
              With a postscript by the author and Lemmermeyer},
 PUBLISHER = {Springer-Verlag, Berlin},
      YEAR = {2002},
     PAGES = {xiv+190},
      ISBN = {3-540-43629-4},
   MRCLASS = {11S25 (11R32 11R34 11S20)},
  MRNUMBER = {1930372},
       DOI = {10.1007/978-3-662-04967-9},
       URL = {https://doi.org/10.1007/978-3-662-04967-9},
}
@book {Se02,
    AUTHOR = {Serre, Jean-Pierre},
     TITLE = {Galois cohomology},
    SERIES = {Springer Monographs in Mathematics},
   EDITION = {English},
      NOTE = {Translated from the French by Patrick Ion and revised by the
              author},
 PUBLISHER = {Springer-Verlag, Berlin},
      YEAR = {2002},
     PAGES = {x+210},
      ISBN = {3-540-42192-0},
   MRCLASS = {12G05 (11R34)},
  MRNUMBER = {1867431},
}

@misc{stacks-project,
    shorthand    = {Stacks},
    author       = {The {Stacks Project Authors}},
    title        = {\textit{Stacks Project}},
    howpublished = {\url{https://stacks.math.columbia.edu}},
    year         = {2018},
  }

\end{filecontents*}
\nocite{*}
\bibliography{lyndon-g2}

\newcommand{\disc}{\operatorname{disc}}
\newcommand{\Aut}{\operatorname{Aut}}
\newcommand{\red}{\operatorname{red}}
\newcommand{\ur}{\operatorname{ur}}
\newcommand{\pr}{\operatorname{pr}}

\newcommand{\univ}{\operatorname{univ}}
\newcommand{\crys}{\operatorname{crys}}

\newcommand{\std}{\operatorname{std}}

\newcommand{\gr}{\operatorname{gr}}
\newcommand{\LD}{\operatorname{LD}}

\newcommand{\res}{\operatorname{res}}

\newcommand{\spec}{\operatorname{Spec}}

\newcommand{\Span}{\operatorname{span}}

\newcommand{\invlim}[1]{\underset{#1}{\underleftarrow{\operatorname{lim}}}}

\titleformat{\section}[runin]{\normalfont\bfseries}{\thesection.}{3pt}{}
\titleformat{\subsection}[runin]{\normalfont\bfseries}{\thesubsection.}{3pt}{}
\titleformat{\subsubsection}[runin]{\normalfont\bfseries}{\thesubsubsection.}{3pt}{}
\titleformat{\paragraph}[runin]{\normalfont\bfseries}{\theparagraph}{3pt}{}
\renewcommand{\thesection}{\arabic{section}}
\titleformat{\section}{\normalfont\large\bfseries}{\thesection.~~}{1em}{}
\setcounter{tocdepth}{1}

\setcounter{secnumdepth}{4}

\dottedcontents{section}[2.5em]{}{2.9em}{1pc}

\begin{document}

\newcommand{\DemRel}{{x_0^q(x_0,x_1)(x_2,x_3)\dots(x_n,x_{n+1})}}
\newcommand{\chicyc}{\chi_{\operatorname{cyc}}}
\newcommand{\Brnr}{\operatorname{Br}_{\operatorname{nr}}}
\newcommand{\Bronr}{\operatorname{Br}_{1,\operatorname{nr}}}
\newcommand{\Br}{\operatorname{Br}}
\newcommand{\Tr}{\operatorname{tr}}
\newcommand{\Mod}{\operatorname{Mod}}
\newcommand{\Qp}{\mathbb{Q}_p}
\newcommand{\TODO}{{\color{red} TODO}}
\newcommand{\Gm}{\mathbb{G}_m}
\newcommand{\Ga}{\mathbb{G}_a}
\newcommand{\Scin}{\operatorname{Scin}}
\newcommand{\Fil}{\operatorname{Fil}}
\newcommand{\Ind}{\operatorname{Ind}}
\newcommand{\Sym}{\operatorname{Sym}}
\newcommand{\sym}{\operatorname{sym}}
\newcommand{\semis}[1]{#1\operatorname{-ss}}
\newcommand{\Alt}{\operatorname{Alt}}
\newcommand{\DGK}{\operatorname{DG}_K}
\newcommand{\Img}{\operatorname{Im}}
\newcommand{\Ker}{\operatorname{Ker}}
\newcommand{\codim}{\operatorname{codim}}
\newcommand{\rank}{\operatorname{rank}}
\newcommand{\grp}{\operatorname{grp}}
\newcommand{\cont}{\operatorname{cont}}
\newcommand{\ord}{\operatorname{ord}}
\newcommand{\Hom}{\operatorname{Hom}}
\newcommand{\Ext}{\operatorname{Ext}}
\newcommand{\Ad}{\operatorname{Ad}}
\newcommand{\Id}{\operatorname{id}}
\newcommand{\Lie}{\operatorname{Lie}}
\newcommand{\Lift}{\operatorname{Lift}}
\newcommand{\ad}{\operatorname{ad}}
\newcommand{\Det}{\operatorname{Det}}
\newcommand{\LHS}{\operatorname{LHS}}
\newcommand{\RHS}{\operatorname{RHS}}
\newcommand{\bFp}{\bar{\mathbb{F}}_p}
\newcommand{\bZp}{\bar{\mathbb{Z}}_p}
\newcommand{\bQp}{\bar{\mathbb{Q}}_p}
\newcommand{\fP}{{\mathfrak{P}}}
\newcommand{\fL}{{\mathfrak{L}}}
\newcommand{\fU}{{\mathfrak{U}}}
\newcommand{\bF}{{\mathbb{F}}}
\newcommand{\bZ}{{\mathbb{Z}}}
\newcommand{\bA}{{\mathbb{A}}}
\newcommand{\bB}{{\mathbb{B}}}
\newcommand{\bQ}{{\mathbb{Q}}}
\newcommand{\bR}{{\mathbb{R}}}
\newcommand{\bC}{{\mathbb{C}}}
\newcommand{\scrC}{{\mathscr{C}}}
\newcommand{\Fq}{\bar{\mathbb{F}}_q}
\newcommand{\GSp}{\operatorname{GSp}}
\newcommand{\Sp}{\operatorname{Sp}}
\newcommand{\GL}{\operatorname{GL}}
\newcommand{\SL}{\operatorname{SL}}
\newcommand{\SO}{\operatorname{SO}}
\newcommand{\GO}{\operatorname{GO}}
\newcommand{\cO}{\mathcal{O}}
\newcommand{\lsup}[2]{{^{#1}\!#2}}
\newcommand{\wh}[1]{\widehat{#1}}
\newcommand{\wt}[1]{\widetilde{#1}}

\newcommand{\cX}{\mathcal{X}}
\newcommand{\Lieg}{\mathfrak{g}}
\newcommand{\Lieh}{\mathfrak{h}}
\newcommand{\Mat}{\operatorname{Mat}}
\newcommand{\Gal}{\operatorname{Gal}}
\newcommand{\Art}{\operatorname{Art}}
\newcommand{\BHT}{\mathbb{B}_{\operatorname{HT}}}
\newcommand{\DXT}{{\hatI}^{X(T)}}
\newcommand{\HTGC}{\prod_{\sigma:L'\hookrightarrow \bC}X_*(G_{\bC})}
\newcommand{\HTGCa}{\prod_{\tilde\tau\in S}X_*(G_{\bC})}
\newcommand{\HT}{{\operatorname{HT}}}
\newcommand{\dR}{{\operatorname{dR}}}
\newcommand{\hatbI}{\widehat{\bar I}_K}
\newcommand{\hatI}{\widehat{I}_K}
\newcommand{\symt}{\operatorname{sym}^2}
\newcommand{\altt}{\operatorname{alt}^2}
\newcommand{\socle}{\operatorname{soc}}
\newcommand\nlcup{%
  \mathrel{\ooalign{\hss$\cup$\hss\cr%
  \kern0.7ex\raise0.6ex\hbox{\scalebox{0.4}{$\diamond$}}}}}
\newcommand{\matt}[9]{
\left(
\begin{matrix}
#1 & #2 & #3 \\
#4 & #5 & #6 \\
#7 & #8 & #9
\end{matrix}
\right)
}

\author{Lin, Zhongyipan}
\title{Lyndon-Demu\v{s}kin method and crystalline lifts of $G_2$-valued Galois representations}

\begin{abstract}
We develop obstruction theory for lifting characteristic $p$
local Galois representations valued in reductive groups
of type $B_l$, $C_l$, $D_l$ or $G_2$.
An application of the Emerton-Gee stack then reduces
the existence of crystalline lifts
to a purely combinatorial problem when $p$ is not too small.

As a toy example, we show for all local fields $K/\Qp$, with $p >3$,
all representations $\bar\rho:G_K \to G_2(\bFp)$
admit a crystalline lift
$\rho: G_K\to G_2(\bZp)$, where $G_2$ is the exceptional Chevalley
group of type $G_2$.



\end{abstract}

\maketitle

\tableofcontents

\section{Introduction}

Let $K/\Qp$ be a p-adic field.
Let $G$ be a connected reductive group over $\bZp$.
Let $\bar\rho:G_K\to G(\bFp)$ be a Galois representation.

We will study whether there exist crystalline lifts 
of $\bar\rho$ to $G(\bZp)$.
This question has been raised in multiple papers,
for example,
(i) irreducible geometric Galois representations
\cite{NCS18}, (ii) the Serre weight conjecture \cite{GHS18},
(iii) ramification theory
 \cite{CL11}.

The pursuit of constructing characteristic $0$ lifts of Galois representations
(at least in higher dimensions)
is, however, resistant to elementary techniques.
\cite{B03} is able to lift mod $\varpi$ representations to a mod $\varpi^2$ one,
for $G=\GL_N$.
\cite{Mu13} constructed crystalline lifts of mod $\varpi$ represntations valued in
$G=\GL_3$, and recently
\cite{EG23} worked the $GL_N$-case for all $N$.
Our earlier work \cite{L22} answers this question for semisimple representations
valued in general reductive groups $G$.

The method of \cite{EG23} is purely local, and is based on an analysis of Galois cohomology.
The image group $\bar\rho(G_K)$ is either an irreducible subgroup of
$G(\bFp)$ 
or factors through a proper maximal parabolic $P$ of $G$.
In the former case,
our previous work \cite{L22} shows
$\bar\rho$ always admits a crystalline lift.
In this paper, we focus on the latter case.
Let $P = L\rtimes U_P$ be the Levi decomposition.
Let $\bar r:G_K\xrightarrow{\bar\rho} P(\bFp)\to L(\bFp)$
be the Levi factor of $\bar\rho$.
Then $\bar\rho$ defines a 1-cocycle
$[\bar c] \in H^1(G_K, U_P(\bFp))$.
What we will actually do is to
construct a lift $r:G_K\to L(\bZp)$ of $\bar r$ and a lift $[c]\in H^1(G_K, U_P(\bZp))$
of $[\bar c]$.

In the $GL_N$-case, all maximal proper parabolics have abelian unipotent radical, so it
suffices to consider abelian cohomology.
When $G$ is not $GL_N$,
parabolic subgroups with
abelian unipotent radical are rare.
For example, when $G$ is the
exceptional group $G_2$,
all parabolics have non-abelian unipotent radical.

Fortunately, for groups of type $A$, $B$, $C$, $D$ or $G_2$,
the relevant non-abelian Galois cohomology can be replaced by
abelian Galois cohomology equipped with a cup product structure
and the strategy considered in \cite{EG23} can be adapted to work.
In this paper, we focus on the $G_2$-case, and
prove the following theorem:

\begin{thm} [Theorem \ref{thm:existence-crys-lift-G2}]
\label{thm:C}
Assume $p>3$.
Every mod $\varpi$ Galois representation
valued in the exceptional group $G_2$
$$
\bar\rho:G_K\to G_2(\bFp)
$$
admits a crystalline lift
$\rho^{\circ}:G_K\to G_2(\bZp)$.

Moreover, if $\bar\rho$ factors through 
a maximal parabolic $P=L\ltimes U$
and the Levi factor $\bar r_{\bar\rho}:G_K\to L(\bFp)$ of $\bar\rho$
admits a Hodge-Tate regular and crystalline lift $r_1:G_K\to L(\bZp)$ such that
the adjoint representation $G_K \xrightarrow{r_1} L(\bZp) \to \GL(\Lie (U(\bZp)))$ has Hodge-Tate weights
slightly less than $\underline 0$ (Definition \ref{def:slightly-less}),
then $\rho^{\circ}$ can be chosen such that it factors through
the maximal parabolic $P$ and its Levi factor $r_{\rho^{\circ}}$
lies on the same irreducible component of the spectrum of the crystalline
lifting ring that $r_1$ does.
\end{thm}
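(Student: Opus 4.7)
The plan is a case analysis on which maximal parabolic subgroups of $G_2(\bFp)$ contain the image of $\bar\rho$. Up to conjugacy, $G_2$ has exactly two maximal parabolics: the short root parabolic $P_s$ and the long root parabolic $P_l$. If $\bar\rho(G_K)$ is contained in no proper parabolic, our earlier work \cite{Lin19} on semisimple crystalline lifts in general reductive groups already produces a crystalline lift; hence we may assume $\bar\rho$ factors through at least one maximal parabolic.

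\textbf{Short root parabolic case.} Suppose $\bar\rho$ factors through $P_s = L_s \ltimes U_{P_s}$. An inspection of the $G_2$ root system shows that $U_{P_s}$ is two-step nilpotent with center $Z(U_{P_s}) \cong \Ga$ (the highest root subgroup) and abelian quotient $U_{P_s}/Z(U_{P_s}) \cong \Ga^{\oplus 4}$, so Theorem \ref{thm:A} applies directly with $U = U_{P_s}$. The hypotheses are verified as follows: [2] holds since $p > 3$; [3] is obtained by passing to a prime-to-$p$ Galois extension $K'/K$ over which $\bar r|_{G_{K'}}$ becomes pro-$p$; [1a] and [1b] reduce to sufficient generic regularity of Galois $H^2$ on the crystalline lifting ring, coming from codimension estimates on the Emerton-Gee stack; and [4] is provided by the given mildly regular lift $r_1$ of the Levi with slightly-less-than-zero weights.

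\textbf{Long root parabolic case.} Otherwise $\bar\rho$ factors through $P_l = L_l \ltimes U_{P_l}$ but through no $G_2(\bFp)$-conjugate of $P_s$. Now $U_{P_l}$ is three-step nilpotent with a two-dimensional center, so Theorem \ref{thm:A} does not apply to $U_{P_l}$ directly. Instead, pass to the Heisenberg quotient $U := U_{P_l}/Z(U_{P_l})$, which satisfies the structural hypothesis of Theorem \ref{thm:A}; this yields a lift of the image of $[\bar c]$ in $H^1(G_K, U(\bZp))$. The residual obstruction to promoting this to a class in $H^1(G_K, U_{P_l}(\bZp))$ lies in $H^2(G_K, Z(U_{P_l})(\bZp))$. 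Decomposing $Z(U_{P_l})$ into its constituent $G_K$-characters and applying Tate local duality, a non-vanishing obstruction would force one of these characters to equal the cyclotomic character, which in turn forces $\bar\rho$ to factor through a $G_2(\bFp)$-conjugate of $P_s$, contradicting the case hypothesis. Hence $H^2(G_K, Z(U_{P_l})(\bF)) = 0$, and the lift extends to all of $U_{P_l}$.

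The moreover clause follows in either case directly from the construction: Theorem \ref{thm:A} is formulated on a fixed irreducible component of $\spec R^{\crys, \underline\lambda}_{\bar r}$ and takes the given $r_1$ as its mildly regular $\bZp$-point, so the output $\rho^\circ$ automatically factors through $P$ with Levi on the same component as $r_1$; in the long root case, the subsequent $Z(U_{P_l})$-extension is additive and does not disturb the component of the Levi. I anticipate the main technical obstacle to be the Tate-local-duality step in the long root parabolic case: one must explicitly identify the $G_K$-characters occurring in $Z(U_{P_l})$ as functions of $\bar r_l := \bar\rho \bmod U_{P_l}$ and carry out a root-datum translation showing that the presence of a cyclotomic character forces $\bar\rho$ into a short root parabolic. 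The verification of the generic regularity hypotheses [1a], [1b] is the other nontrivial input, relying on codimension estimates on the Emerton-Gee stack established earlier in the paper.
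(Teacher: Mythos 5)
Your proposal follows the same overall architecture as the paper's proof: a case split on which maximal parabolic contains $\bar\rho$, citing the earlier semisimple-lifting result for the irreducible case, directly applying Theorem~\ref{thm:A} to the two-step unipotent radical of the short root parabolic, and in the long root case passing to the Heisenberg quotient $V/Z(V)$ and extending the resulting lift along $Z(V)$ via an unobstructedness argument. Your handling of the ``moreover'' clause is also essentially what the paper does.

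One phrasing in the long root parabolic step is imprecise but harmless. You say a non-vanishing obstruction would ``decompose $Z(U_{P_l})$ into its constituent $G_K$-characters'' and force one of them to be cyclotomic. Since $Z(U_{P_l})$ carries the two-dimensional representation $\std(\bar r)$, it decomposes into characters only if $\bar r$ is reducible. The cleaner route — the one the paper takes — is to note that $\bar\rho$ factoring through $P_l$ but through no conjugate of $P_s$ forces $\bar r$ to be irreducible (otherwise $\bar\rho$ would land in a Borel, hence in $P_s$), so $H^2(G_K, Z(U_{P_l})(\bFp)) = H^2(G_K, \bar r) = 0$ outright by Tate duality, with no need to speak of constituent characters. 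Your contrapositive ultimately reaches the same place, since a cyclotomic constituent would force $\bar r$ reducible and hence $\bar\rho\subset P_s$, but the argument is cleaner stated directly in terms of irreducibility. Also note that hypothesis [4] of Theorem~\ref{thm:A} (mild regularity of $r_1|_{G_{K'}}$) is not the same as the given ``slightly less than $\underline{0}$'' Hodge--Tate condition; the paper derives the former from the latter via Proposition~\ref{prop:G2-MR}, a step worth making explicit.
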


\subsection{Overview of the method and comparison with \cite{L23}}
\label{par:classical-groups}

To establish the existence of crystalline lifts, we proceed in four steps:
\begin{itemize}
\item[(Step 1)] construct explicit cochain complexes {\it equipped with a natural cup product structure} that compute abelian Galois cohomology,

\item[(Step 2)] show that the cup product considered in (Step 1) is non-trivial in certain special cases,

\item[(Step 3)] compute the dimension of certain substacks of the reduced Emerton-Gee stack,

\item[(Step 4)] invoke the machinary of \cite{EG23} to produce crystalline lifts.
\end{itemize}

After the first draft of this paper was written,
we have a more conceptual understanding of some constructions made in this paper;
see the introduction section of \cite{L23}.
For example, 
section \ref{sec:LD} and section \ref{sec:manipulate} of this paper are conceptualized 
under the notion of {\it Heisenberg equations}.
In loc. cit., we also establish the existence of de Rham lifts for many classical groups
and in particular the existence of crystalline lifts for unramified unitary groups.

However, from the technical perspective, loc. cit. parallels this paper, instead of
upgrades this paper.
In loc. cit., we use {\it Herr complexes} as the explicit cochain complex computing Galois cohomology.
Herr complexes are infinite dimensional cochain complexes and are often not amenable to computation by hand.
We can truncate Herr complexes to a finite dimensional cochain complex but the truncation
can't be made explicit in general.
The upside of Herr complexes is better functoriality and in the case of classical groups,
we can usually reduce the problems to the $\GL_n$-case, which is well-understood.

In this paper, we use Lyndon's cochain complexes instead. 
Everything in this paper are totally explicit and are computable by hand or by a computer algebra system.
The downside of this approach is that the complexity of computation grows exponentially,
and quickly becomes out of hand for large-ranked classical groups.

We don't know how to deal with Herr complexes for exceptional groups because of their implicit nature,
and the approach in this paper is still the only one we are aware of.
In this paper, we establish the existence of crystalline lifts for the exceptional group $G_2$,
which illustrates the usefulness of Lyndon's cochain complexes.
Because of its explicit nature, our approach can potentially be extended to deal with more general exceptional groups,
after upgrading the cup product structure to more complicated higher Massey product structures.

\subsection{Obstruction theory for crystalline lifting}
~
\vspace{3mm}

In this paper, we consider the case
where $U_P$ admits a quotient $U$ such that
\begin{itemize}
\item The adjoint group $U^{\ad}:=U/Z(U)$
is abelian;
\item The center $Z(U)$ is isomorphic to $\Ga$; and
\item There is a bijection of obstructions
``$H^2(G_K, U_P(\bFp))$''
$\cong$ ``$H^2(G_K, U(\bFp))$''.
\end{itemize}
We call $U$ a Heisenberg quotient of $U_P$.
When $G$ is of type $B_l$, $C_l$, $D_l$ or $G_2$,
it is always possible to choose a parabolic
$P$ whose unipotent radical admits
a Heisenberg quotient (see subsection \ref{par:classical-groups}).


Let $\spec R$ be an irreducible component of a crystalline lifting ring $\spec R^{\crys,\underline{\lambda}}_{\bar r}$ (Definition \ref{def:crys-lifting})
of $\bar r$.
Let $r^{\univ}:G_K\to L(R)$
be the universal family.
The Levi factor group acts on $U$
via conjugation
$\phi:L\to\Aut(U)$.
Write
$\phi^{\ad}:L\to\GL(U^{\ad})$
and $\phi^z:L\to\GL(Z(U))$
for the graded pieces of $\phi$.

The theorem we prove is:

\begin{thm}[\ref{thm:heisenberg-lift}]
\label{thm:A}
Let $[\bar c]\in H^1(G_K, U(\bF))$
be a characteristic $p$ cocycle,
where $U$ is a Heisenberg quotient of $U_P$.

Assume
\begin{itemize}
\item[ {[1]} ]
$H^2(G_K, \phi^{\ad}(r^{\univ}))$ is sufficiently generically regular (Definition \ref{def:sgr})
and set-theoretically supported on the special fiber of $\spec R$;
\item[ {[2]} ] $p\ne 2$;
\item[ {[3]} ]
There exists a 
finite Galois extension $K'/K$ of prime-to-$p$ degree
such that
$\phi(\bar r)|_{G_{K'}}$
is Lyndon-Demu\v{s}kin (Definition \ref{def:ld}); and
\item[ {[4]} ]
There exists a $\bZp$-point
of $\spec R$ which is mildly regular (Definition \ref{def:mildly-regular})
when restricted to $G_{K'}$.
\end{itemize}
Then there exists a $\bZp$-point of
$\spec R$ which gives rise to
a Galois representation
$r^{\circ}:G_K\to L(\bZp)$ such that
if we endow $U(\bZp)$ with the
    $G_K$-action
    $G_K\xrightarrow{r^{\circ}} L(\bZp)\xrightarrow{\phi} \Aut(U(\bZp))$,
the cocycle $[\bar c]$ has a characteristic $0$
lift $[c]\in H^1(G_K, U(\bZp))$.
\end{thm}

\noindent \text{\bf Remark}
[3] is automatically satisfied if $p$ is sufficiently large;
and [4] is automatically satisfied if $p$ is sufficiently large and the labeled Hodge-Tate weights
$\phi^{\ad}(\underline \lambda)$ are
slightly less then $0$ (Definition \ref{def:slightly-less}).

\subsubsection{Example: $G=\GL_3$}
~

Let $\bar \rho:G_K\to \GL_3(\bFp)$ 
be a completely reducible Galois representation.
There are two ways of encoding the
data of $\bar\rho$ as a $1$-cocycle in
Galois cohomology.

(I) Use the fact $\bar\rho$ factors through
a maximal parabolic 
$$
P=
\begin{bmatrix}
* &* &* \\
* &* &*\\
0 &0 &*
\end{bmatrix}
=
\begin{bmatrix}
* &* &0 \\
* &* &0\\
0 &0 &*
\end{bmatrix}
\ltimes
\begin{bmatrix}
1 &0 &* \\
0 &1 &*\\
0 &0 &1
\end{bmatrix}
=L\ltimes A
$$
where $A\cong \Ga^{\oplus 2}$
is a rank-$2$ abelian group.
Let $\bar r:G_K\xrightarrow{\bar\rho}P(\bFp)\to L(\bFp)$
be the Levi factor of $\bar\rho$.
The information of $\bar\rho$
is encoded in a $1$-cocycle
$[\bar c]\in H^1(G_K, \phi(\bar r))=:H^1(G_K, A(\bFp))$.
We first construct a lift $r^{\circ}:G_K\to\GL_2(\bZp)$
of $\bar r$.
Then we construct a lift $[c]\in H^1(G_K, A(\bZp))$ of $[\bar c]$.

(II) Use the fact $\bar\rho$ factors through a Borel
(minimal parabolic)
$$
B=
\begin{bmatrix}
* &* &* \\
0 &* &*\\
0 &0 &*
\end{bmatrix}
=
\begin{bmatrix}
* &0 &0 \\
0 &* &0\\
0 &0 &*
\end{bmatrix}
\ltimes
\begin{bmatrix}
1 &* &* \\
0 &1 &*\\
0 &0 &1
\end{bmatrix}
=T\ltimes H
$$
where the Levi group 
$T$ is a maximal torus,
and the unipotent radical 
$H$ is the Heisenberg group.
Let $\bar r:G_K\to T(\bFp)$
be the Levi factor of $\bar\rho$.
To reconstruct $\bar\rho$
from $\bar r$, 
we only need the information
of a $1$-cocycle
$[\bar c]\in H^1(G_K, H(\bFp))$.
We first construct a lift of $\bar r$,
and then construct
a lift of $\bar c$.
Now $H^1(G_K, H(\bFp))$
is non-abelian Galois cohomology.

We make use of the graded
structure of $\Lie H$ when we
construct a lift of $[\bar c]$.
We have a short exact sequence
$$
1\to
\begin{bmatrix}
1 &0 &* \\
0 &1 &0\\
0 &0 &1
\end{bmatrix}
\to H \to
\begin{bmatrix}
1 &* & \\
0 &1 &*\\
0 &0 &1
\end{bmatrix}
\to 1.
$$
We will first construct a lift
modulo $Z(H)$, and then extend
the lift modulo $Z(H)$
to a cocycle on the whole
unipotent radical $H$.

Theorem \ref{thm:A} applies
in this situation,
so we have a new proof 
for the group $\GL_3$.

\subsubsection{}
We have a short exact sequence of
groups
$0\to Z(U)\to U\to U^{\ad}\to 0$.
Since $Z(U)$ is a central, normal subgroup, we have a long exact sequence of pointed sets
$$
H^1(G_K, Z(U))\to H^1(G_K, U)\to H^1(G_K, U^{\ad})\xrightarrow{\delta} H^2(G_K, Z(U)).
$$
Note that $\delta$ is a quadratic form, and 
there is an associated
bilinear form 
$$
\cup: H^1(G_K, U^{\ad}) \times H^1(G_K, U^{\ad})
\to H^2(G_K, Z(U))
$$
defined by $x\cup y = (\delta(x+y)-\delta(x)-\delta(y))/2$.

The technical heart of this paper is an analysis of $\cup$
on the cochain/cocycle level.
So we need a finite cochain complex computing
Galois cohomology which interacts nicely with 
the bilinear form $\cup$.
Thanks to the theory of Demu\v{s}kin groups,
there is an explicitly defined cochain complex
(the so-called Lyndon-Demu\v{s}kin complex)
which computes $H^\bullet(G_{K'},U^{\ad})$
and $H^\bullet(G_{K'},Z(U))$
after a finite Galois extension $K'/K$.
When $[K':K]$ is prime to $p$,
we can fully understand cup products
on the cochain/cocycle level
via Lyndon-Demu\v{s}kin complexes
endowed with $G_K/G_{K'}$-action.

We have the following nice obstruction theory:

\begin{thm} [\ref{cor:obstruction-theory}]
\label{thm:B}
Let $p\ne 2$ be a prime integer.
Let $L$ be a reductive group over $\cO_E$ and fix
an algebraic group homomorphism $L\to\Aut(U)$.
Let $r :G_K \to L(\cO_E)$ be a Galois representation.

If there exists a
finite Galois extension $K'/K$ of prime-to-$p$ degree
such that
$r|_{G_{K'}}$ is Lyndon-Demu\v{s}kin and mildly regular, then
there is a short exact sequence of pointed sets
$$
H^1(G_K, U(\bZp))
\to H^1(G_K, U(\bFp))
\xrightarrow{\delta} H^2(G_{K}, U^{\ad}(\bZp))
$$
where $\delta$ has a factorization
$H^1(G_K, U(\bFp))\xrightarrow{p}
H^1(G_{K}, U^{\ad}(\bFp)) \to
H^2(G_{K}, U^{\ad}(\bZp))$.
\end{thm}

\vspace{3mm}



\subsection{Organization}

In section \ref{sec:LD}, we review 
the results of Lyndon and Demu\v{s}kin
and establish some notations.

Section \ref{sec:FC}
and Section \ref{sec:manipulate}
form the technical heart of this paper.

Section \ref{sec:MACHINE} 
and Section \ref{sec:codim}
are mild generalizations of results 
from \cite{EG23}.
The proof is almost unchanged and 
we often just sketch the ideas of the proof
and invite the readers to look at the proofs
of \cite{EG23}.

We prove the main theorem in Section \ref{sec:main}.

\subsection{Acknowledgement}

I would like to thank David Savitt,
for suggesting to me the project of constructing
    crystalline lifts of Galois representations
    valued in general reductive groups,
    and for his excellent advisoring.
    I would like to thank Matthew Emerton
    for teaching me his work \cite{EG23}.
I also want to thank the referees for very careful reading
and
thank Joel Specter and Xiyuan Wang
for helpful discussions.

\vspace{3mm}

\section{Lyndon-Demu\v{s}kin theory}

\label{sec:LD}

Assume $p\ne 2$.

Let $K/\Qp$ be a finite extension
containing the $p$-th root of unity.
The maximal pro-$p$ quotient of
the absolute Galois group $G_K$ has
a very nice description.
The following well-known theorem can be found,
for example, in \cite[Section II.5.6]{Se02}.

\subsubsection{Theorem}
Let $G_K(p)$ be the maximal pro-$p$ quotient
of $G_K$. Then $G_K(p)$
is the pro-$p$ completion of the
following one-relator group
$$
\langle x_0,\cdots,x_{n+1}|\DemRel\rangle
$$
where $n=[K:\Qp]$, and $q=p^s$ is the largest power of $p$
such that $K$ contains the $q$-th roots of unity.
Here $(x,y)=xyx^{-1}y^{-1}$.

\subsubsection{Definition}
\label{def:ld}
A continuous profinite $G_K$-module $A$ is
said to be \textit{Lyndon-Demu\v{s}kin}
if the image of $G_K\to\Aut(A)$
    is a pro-$p$ group.

\subsection{Comparing cohomology of Demu\v{s}kin groups
and Galois cohomology}~

\vspace{3mm}

Let $\Gamma^{\disc}$
be the discrete group
with one relator
$$
\langle
x_0, \dots, x_n, x_{n+1}|\DemRel
\rangle.
$$

Let $K/\Qp$ be a $p$-adic field
containing the group of $p$-th root of
 unity.
Let $A$ be a Lyndon-Demu\v{s}kin $G_K$-module.
Write $H^{\bullet}(\Gamma^{\disc}, A)$
for the usual group cohomology,
and write
$H^{\bullet}(G_K, A)$
for the continuous profinite cohomology.

Note that there is a functorial map
$$
\text{($\dagger$)}\hspace{2mm}H^{\bullet}(G_K, A) \to H^{\bullet}(\Gamma^{\disc}, A)
$$
induced from the forgetful functor
$\Mod_{\cont}(G_K(p))\to \Mod(\Gamma^{\disc})$.

\subsubsection{Lemma}
Let $\bF_p$ be the $G_K$-module with
trivial $G_K$-action.
Then
($\dagger$) induces isomorphisms:

(1) $H^1(G_K, \bF_p)=H^1(\Gamma^{\disc}, \bF_p)$;

(2) $H^2(G_K, \bF_p)=H^2(\Gamma^{\disc}, \bF_p)$;

\begin{proof}
(1)
We have
$$H^1(G_K, \bF_p) = \Hom_{\cont}(G_K, \bF_p)=\Hom_{\cont}(G_K(p), \bF_p);$$
$$H^1(\Gamma^{\disc}, \bF_p)=
\Hom(\Gamma^{\disc}, \bF_p).$$
Note that $\Hom_{\cont}(G_K(p), \bF_p)= \Hom(\Gamma^{\disc}, \bF_p)$
because $G_K(p)$ is the pro-$p$ completion of $\Gamma^{\disc}$.

(2)
We have a commutative diagram
$$
\xymatrix{
    H^1(G_K, \bF_p)
    \times H^1(G_K, \bF_p)
    \ar@<-6ex>[d]
    \ar@<+9ex>[d]\ar[r]^{\hspace{9mm}\cup}
    &
    H^2(G_K, \bF_p)\ar[d]\\
    H^1(\Gamma^{\disc}, \bF_p)
    \times H^1(\Gamma^{\disc}, \bF_p)
    \ar[r]^{\hspace{9mm}\cup}
    &
    H^2(\Gamma^{\disc}, \bF_p)
}
$$
Note that the first row
is a non-degenerate pairing,
and $H^2(G_K, \bF_p)\cong \bF_p$
by local Tate duality.
By Lyndon's theorem or
Corollary \ref{cor:h2-disc},
we have $H^2(\Gamma^{\disc}, \bF_p)\cong \bF_p$.
So it remains to show the cup product
of the second row is non-trivial.
Let $[c_1], [c_2]\in H^1(\Gamma^{\disc}, \bF_p)$.
$[c_1]\cup[c_2] = 0$ if and only
if there exists a group homomorphism
    $$
    \Gamma^{\disc}\to
\begin{bmatrix}
1 & c_1 & * \\
 & 1 & c_2 \\
 & & 1
\end{bmatrix}
    $$
    for some $*$.
        Indeed, if $c_1\cup c_2= d z$ for some $z\in C^1(\Gamma^{\disc}, \bF_p)$,
        then $\Gamma^{\disc}\to
\begin{bmatrix}
1 & c_1 & z \\
 & 1 & c_2 \\
 & & 1
\end{bmatrix}$ is a group homomorphism by unravelling the definition of cup products;
here $C^1(\Gamma^{\disc}, \bF_p)$ is the usual cochain group defining group cohomology.
        Define $c_i:\Gamma^{\disc}\to\bF_p$ by sending $x_i$ to $1$ and other generators to $0$, $i=0,1$.
        Then it is clear $[c_1]\cup[c_2]\ne 0$.
\end{proof}

\subsubsection{Corollary}
Let $A$ be a finite $\bF_p$-vector
space endowed with 
Lyndon-Demu\v{s}kin $G_K$-action.
Then there is a canonical isomorphism
$H^{\bullet}(G_K, A) = H^{\bullet}(\Gamma^{\disc}, A)$.

\begin{proof}
Let $G_K(p)$ be the maximal pro-$p$
quotient of $G_K$.
Then $A$ is a $G_K(p)$-module.
Since $G_K(p)$ is a pro-$p$ group,
$A$ must contain the trivial representation $\bF_p$.
In particular, there is a short exact sequence
$$
0\to \bF_p\to A \to A'\to 0
$$
which induces the long exact sequence
$$
\xymatrix{
H^0(G_K, A')\ar[r]\ar[d]&
H^1(G_K, \bF_p) \ar[r]\ar[d]&
H^1(G_K, A) \ar[r]\ar[d]&
H^1(G_K, A') \ar[r]\ar[d]&
H^2(G_K, \bF_p) \ar[d]\\
H^0(\Gamma^{\disc}, A')\ar[r]&
H^1(\Gamma^{\disc}, \bF_p) \ar[r]&
H^1(\Gamma^{\disc}, A) \ar[r]&
H^1(\Gamma^{\disc}, A') \ar[r]&
H^2(\Gamma^{\disc}, \bF_p)\\
}
$$
We apply induction on the length
of $A$.
By the Five Lemma, we have 
$H^1(G_K, A)=H^1(\Gamma^{\disc}, A)$.

We also have the long exact sequence
$$
\xymatrix{
H^1(G_K, A')\ar[r]\ar[d]&
H^2(G_K, \bF_p) \ar[r]\ar[d]&
H^2(G_K, A) \ar[r]\ar[d]&
H^2(G_K, A') \ar[r]\ar[d]&
H^3(G_K, \bF_p) \ar[d]\\
H^1(\Gamma^{\disc}, A')\ar[r]&
H^2(\Gamma^{\disc}, \bF_p) \ar[r]&
H^2(\Gamma^{\disc}, A) \ar[r]&
H^2(\Gamma^{\disc}, A') \ar[r]&
H^3(\Gamma^{\disc}, \bF_p)\\
}
$$
By Lyndon's theorem,
$H^3(\Gamma^{\disc}, \bF_p)=0$.
By local Tate duality,
$H^3(G_K, \bF_p)=0$.
Again by the Five Lemma, we have
$H^2(G_K, A)=H^2(\Gamma^{\disc}, A)$.
Finally, both cohomology groups are supported on degrees $[0,2]$.
\end{proof}

By induction on the order of $A$,
($\dagger$) is an isomorphism for
any finite $p$-power torsion group $A$.

\subsubsection{Corollary}
\label{cor:cont-disc}
Let $A$ be a finite $\bZ_p$-module
endowed with Lyndon-Demu\v{s}kin $G_K$-action.
Then there is a canonical isomorphism
$
H^{\bullet}(G_K, A)=H^{\bullet}(\Gamma^{\disc}, A)
$.

\begin{proof}
We have a short exact sequence
for each $k>0$,
$$0\to \invlim{i}^1H^{k-1}(G_K, A/p^i A)
\to H^k(G_K, A)\to \invlim{i} H^k(G_K, A/p^iA)\to 0,$$
see, for example \cite[Tag 0BKN]{stacks-project}; here $\invlim{i}^1$ is the derived inverse limit.
The first term
is $0$ due to the finiteness
of the cohomology of
torsion $G_K$-modules.
So $H^k(G_K, A)=\invlim{i}H^k(G_K, A/p^iA)$,
and the corollary is reduced to the $p$-power torsion case.

We can do the same thing for
the discrete cohomology.
Since any finite $\bZ_p$-module
is $p$-adically complete,
the Lyndon-Demu\v{s}kin complex
(see the last subsection of Section \ref{sec:LD})
computing
$H^{\bullet}(\Gamma^{\disc}, A)$
is the inverse limit
of the Lyndon-Dem\"iskin complex
mod $p^i$.
So $H^k(\Gamma^{\disc}, A)=\invlim{i}H^k(\Gamma^{\disc}, A/p^i)$.
\end{proof}

The lemma above tells us that,
for our purposes, the cohomology groups of $G_K(p)$
can be computed via the discrete model.
So we can make use of the fine machineries of
combinatorial group theory.

\subsection{Discrete group cohomology of Demu\v{s}kin groups} ~

\vspace{3mm}

The main reference of this subsection is \cite{Ly50}.

\vspace{3mm}

\paragraph{Derivations}

A derivation of a group $G$ is a left $G$-module $M$,
together with a map
$D: G\to M$ such that $D(uv) = Du+uDv$.

Say $F$ is a free group with generators $x_1$,\dots $x_m$.
Denote by $dFJ$ the module of universal derivations.
Then $dFJ$ is the free $\mathbb{Z}[F]$-module
with basis $\{dx_i|i=1,\dots,m\}$.

Let $u\in F$.
We can write $du\in dFJ$ as a linear combination of the basis elements:
$
du = \sum \frac{\partial u}{\partial x_i}
dx_i
$
where $\frac{\partial u}{\partial x_i}\in \mathbb{Z}[F]$.
The computation rules for $\frac{\partial u}{\partial x_i}$ can be found
in the first line of page 654 of \cite{Ly50}.

\vspace{3mm}

\paragraph{Theorem} (Lyndon, \cite[Theorem 11.1]{Ly50})
Let $
G = \langle x_1,\dots,x_m | R \rangle
$ be a one-relator group
where $R=Q^q$ for no $q>1$.
Let $K$ be any left $G$-module. Then
$$
H^2(G,K) \cong 
K/ (\frac{\partial R}{\partial x_1},\dots
\frac{\partial R}{\partial x_m})K
$$
and
$
H^n(G,K) = 0
$
for all $n>2$.

\vspace{3mm}

\paragraph{Corollary}
\label{cor:h2-disc}
We have
$H^2(\Gamma^{\disc}, \bF_p)=\bF_p$.

\begin{proof}
We have the following computation:
\begin{eqnarray}
\frac{\partial R}{\partial x_0}
&=&
1+x_0+\cdots+x_0^{q-2} + x_0^{q-1}x_1^{-1}
\nonumber\\
\frac{\partial R}{\partial x_1}
&=&
x_0^{q-1}x_1^{-1}(x_0-1)
\nonumber\\
\frac{\partial R}{\partial x_2}
&=&
x_0^q(x_0,x_1)x_2^{-1}(x_3-1)
\nonumber\\
\frac{\partial R}{\partial x_3}
&=&
x_0^q(x_0,x_1)x_2^{-1}x_3^{-1}(x_2-1)
\nonumber\\
\vdots& & \vdots\nonumber\\
\frac{\partial R}{\partial x_{2k}}
&=&
x_0^q(x_0,x_1)\cdots(x_{2k-2},x_{2k-1})
x_{2k}^{-1}(x_{2k+1}-1)
\nonumber\\
\frac{\partial R}{\partial x_{2k+1}}
&=&
x_0^q(x_0,x_1)\cdots(x_{2k-2},x_{2k-1})
x_{2k}^{-1}x_{2k+1}^{-1}(x_{2k}-1)
\nonumber\\
\vdots&& \vdots\nonumber
\end{eqnarray}

Since
$H^2(\Gamma^{\disc}, \bF_p)=
\frac{\bF_p}{(\partial R/\partial x_0, \cdots, \partial R/x_{n+1})}$,
it suffices to show
$$
\frac{\partial R}{\partial x_0}\bF_p=\dots=
\frac{\partial R}{\partial x_{n+1}}\bF_p=0.
$$
Since $\bF_p$ is a trivial $G_K$-module,
it is clear
$\frac{\partial R}{\partial x_1}\bF_p=\dots
\frac{\partial R}{\partial x_{n+1}}\bF_p
=0$.
We also have
$
\frac{\partial R}{\partial x_0}=
1+1+\cdots+1=q=0
$ mod $p$.
\end{proof}

\subsubsection{Proposition}
\label{prop:coh}
Let $A$ be a $G_K$-module
whose underlying abelian group 
is a finitely generated $\mathbb{Z}_p$-module such that
the image of $G_K$ in $\Aut(A)$ is a pro-$p$ group.
Then
$$
H^2(G_K,A)\cong 
A/ (\frac{\partial R}{\partial x_0},\dots,
\frac{\partial R}{\partial x_{n+1}})A
$$
where
$
R = 
\DemRel
$.

\begin{proof}
Combine Corollary \ref{cor:cont-disc} and Lyndon's theorem.
\end{proof}

\vspace{3mm}

\subsection{Lyndon-Demu\v{s}kin Complex}~
\label{ss:LDC}

\subsubsection{Abelian coefficient case}
Let $A$ be a $G_K$-module
whose underlying abelian group 
is a finitely generated $\mathbb{Z}_p$-module such that
the the image of $G_K$ in $\Aut(A)$ is a pro-$p$ group.

Then there is an explicit co-chain complex 
computing the Galois cohomology $H^{\bullet}(G_K,A)$.

Define 
$C^{\bullet}_{\LD}(A)
=
[C^0_{\LD}(A)\xrightarrow{d^1} C^1_{\LD}(A)
\xrightarrow{d^2} C^2_{\LD}(A)]$
as
the following cochain complex supported
on degrees [0,2]
$$
A \xrightarrow{
\begin{bmatrix}
1-x_0\\
\dots\\
1-x_{n+1} \\
\end{bmatrix}
}
A^{\oplus (n+2)} \xrightarrow{
\begin{bmatrix}
\partial R/\partial x_0\\
\dots\\
\partial R/\partial x_{n+1}\\
\end{bmatrix}^{T}
}A.
$$

Then by \cite[Theorem 11.1]{Ly50}
$$
H^{\bullet}(C^{\bullet}_{LD}(A))
=H^{\bullet}(G_K, A).
$$

The idea of Lyndon Demu\v{s}kin
complex is simple.
A $1$-cochain $c\in C^1_{\LD}(A)$
is simply a set-theoretical function
$$
c: \{x_0, \dots, x_{n+1}\}
\to A.
$$
We can extend $c$ to be a function
on the free group
$$
c:\langle x_0, \dots, x_{n+1}\rangle
\to A
$$
by setting
$c(gh):=c(g)+g\cdot c(h)$
for any $g,h$ in the free group
    with $(n+2)$ generators.
Let 
$$
R=\DemRel
$$
be the single relation defining the
Demu\v{s}kin group.
The differential operator
$d^2:C^1_{\LD}(A)\to C^2_{\LD}(A)$
is nothing but the evaluation of the
extended map $c$ at the relation $R$,
that is,
$d^2(c) = c(R)$.
So a $1$-cochain $c$ is a $1$-cocycle
if and only if its evaluation at
    $R$ is $0$.

\subsubsection{Nilpotent coefficients}
\label{sss:nilcoef}

Let $E/\Qp$ be a finite extension
with ring of integers $\cO_E$,
residue field $\bF$, and uniformizer $\varpi$.

Let $U$ be a unipotent (smooth connected) linear algebraic group
over $\spec \cO_E$, admitting
an upper central series
$$
{1} = U_0 \subset U_1 \cdots \subset U_k
= U.
$$
Assume there exists an embedding $\iota: U\hookrightarrow \GL_N\subset \Mat_{N\times N}$
such that $(\iota(x)-1)^{k+1}=0$ for all $x\in U$.
Write $\log=\log_{\le k}$ for the truncated logarithmic function
$1+x\mapsto x - x^2/2 + \cdots + (-1)^{k+1}x^k/k$.

Assume $p>k$.
There is an isomorphism of
schemes
$
U \cong \Lie U
$
sending $g\mapsto \log g$, defined through the following commutative diagram
$$
\xymatrix{
    U\ar[d]^{\log}\ar[r] & \GL_N \ar[d]^{\log}\\
    \Lie U \ar[r] & \Mat_{N\times N}
}
$$

We assume $k=2$ from now on
because it suffices for our applications.


Fix a Galois action
$G_K\to \Aut(U)(\cO_E)$
such that the image group is a pro-$p$
subgroup of $\Aut(U)(\cO_E)$.

Let $A$ be an $\cO_E$-algebra.
Recall that a \textit{non-abelian crossed homomorphism} valued in $U(A)$
is a map $c:G_K\to U(A)$ such that
$$
c(gh) = c(g)(g\cdot c(h))
$$
for all $g,h\in G_K$.
Set $\mathfrak{c}:=\log (c): G_K \to \Lie U(A)$.
By the Baker-Campbell-Hausdorff formula,
$$
\text{($\dagger$)}\hspace{5mm}\mathfrak{c}(gh) = \mathfrak{c}(g) + 
g\cdot \mathfrak{c}(h) + \frac{1}{2}[
\mathfrak{c}(g), g\cdot\mathfrak{c}(h)].
$$
Our definition
of the Lyndon-Demu\v{s}kin cochain complex
is motivated by ($\dagger$).

\vspace{3mm}

\paragraph{Definition}
\label{def:LD-diff}
Let $A$ be an $\cO_E$-algebra.
The Lyndon-Demu\v{s}kin complex with unipotent coefficients is defined to be the following cochain complex
$C_{\LD}^{\bullet}(U(A))$
supported in degrees [0,2]:
$$
\Lie U(A) \xrightarrow{d^1} (\Lie U(A))^{\oplus n+2}
\xrightarrow{d^2} \Lie U(A)
$$
where $d^1$ is defined by
$$
d^1(v) 
= (-v + x_i\cdot v + \frac{1}{2}[-v, x_i\cdot v])_{i=0,\ldots,n+1}.
$$
We need some preparations
before we define $d^2$.
An element
$c = (\alpha_0, \cdots, \alpha_{n+1})\in C^1_{\LD}(U(A))$
can be regarded as a function on the free group with $(n+2)$ generators
$$
c: \langle x_0, \cdots, x_{n+1} \rangle \to \Lie U(A)
$$ by setting $c(x_i)=\alpha_i$ for each $i$
and extending it to the whole free group
by
$$c(gh):=c(g)+g\cdot c(h)+\frac{1}{2}[c(g), g\cdot c(h)]$$
We define $d^2$ as
$$
d^2 (c):= c(R) = c(\DemRel).
$$

\paragraph{Remark}
\label{rem:ld-underlying-group}
(1)
When $U$ is an abelian group,
we recover the definition
in the previous section
for the cohomology of the abelian $U(A)$;

(2)
The main reason we define $C^{\bullet}_{\LD}(U(A))$ this way
is because we want to compare it with
$C^{\bullet}_{\LD}(\Lie U(A))$.

Note that
$C^{\bullet}_{\LD}(\Lie U(A))$
and 
$C^{\bullet}_{\LD}(U(A))$
have the same underlying group,
but their differential $d^{\bullet}$
is different.

(3) Note that $d^2(c) = 0$
if and only if $c$ defines a crossed homomorphism $\mathfrak{c}:G_K\to \Lie U(A)$
    in the sense of ($\dagger$).
    See the proof of Proposition \ref{prop:HLDvsH}.

(4) The differential maps are generally non-linear.

\vspace{3mm}

\paragraph{Definition}
We define $Z_{\LD}^{i}:=(d^{i+1})^{-1}(0)$,
and $B_{\LD}^i := d^i(C_{\LD}^{i-1})$ for $i=0,1,2$.

\vspace{3mm}
\paragraph{Proposition}
\label{prop:HLDvsH}
We have $$H^0(G_K, U(A))\cong Z^0_{\LD}(U(A))$$
and a surjection of pointed sets
$$
Z^1_{\LD}(U(A)) \to H^1(G_K, U(A)).
$$

\begin{proof}
$H^0(G_K, U(A))$ is by definition the $G_K$-fixed point subset
of $U(A)$,
while $Z^0_{\LD}(U(A))$ is the subset of $U(A)$ whose elements
are fixed by the $x_0$,\ldots, $x_{n+1}$:
if $u\in U(A)$ is fixed by $x_i$, then
    $u^{-1}(x_i\cdot u)=1$ and taking truncated log of both sides
    we get $d^1(\log u)=0$.

$H^1(G_K, U(A))$ is by definition the set of equivalence classes of crossed homomorphisms,
and $Z^1_{\LD}(U(A))$ is the set of crossed homomorphisms.
\end{proof}


\vspace{3mm}

$\Lie U$ has a lower central series
filtration.
Let $Z(U)$ be the center of $U$.
Write $U^{\ad}$ for $U/Z(U)$.
Since $U$ is unipotent of class $2$,
$\Lie U$ is isomporphic to its graded
Lie algebra
$\Lie U\cong \gr^{\bullet}(\Lie U)$.
We will fix a grading 
$\Lie U \cong Z(U) \oplus U^{\ad}$
of the Lie algebra $\Lie U$
once for all.
In particular, we fixed 
a projection 
$\pr:\Lie U \to Z(U)$.

\subsubsection{Cup products}
\label{def:LD-cup}
Let $c\in C^1_{\LD}(U^{\ad}(A))$.
Let $\widetilde{c} \in C^1_{\LD}(U(A))$
be the (unique) lift of $c$ such that
$
\pr(\widetilde{c}(x_0))=\dots
\pr(\widetilde{c}(x_{n+1}))=
0$.
Define $$Q(c):= \pr(d^2(\widetilde{c}))
=\pr(\widetilde{c}(R))
\in C^2_{\LD}(Z(U)(A)).$$

\noindent
{\bf Lemma}
$Q(-)$ is a quadratic form, that is, 
$(x, y)\mapsto Q(x+y)-Q(x)-Q(y)$ is a bilinear form.

\begin{proof}
In Definition \ref{def:LD-diff},
we defined it so that
$\wt c(gh):=\wt c(g)+g\cdot \wt c(h)+\frac{1}{2}[\wt c(g), g\cdot \wt c(h)]$.
So after fully expanding the expression, $\wt c(R) = \sum_i \alpha_i c(x_i) + \sum_{i<j}[\beta_i c(x_i), \gamma_i c(x_j)]$,
where $\alpha_i, \beta_i, \gamma_j\in \langle x_0, \ldots, x_{n+1}\rangle$.
Thus $Q(c)=\pr(\sum_i \alpha_i c(x_i) + \sum_{i<j}[\beta_i c(x_i), \gamma_i c(x_j)]) = \sum_{i<j}\pr([\beta_i c(x_i), \gamma_i c(x_j)])$, which is clearly a quadratic form.
\end{proof}

We define
\begin{eqnarray}
C_{\LD}^1(U^{\ad}(A))\times
C_{\LD}^1(U^{\ad}(A)) &\xrightarrow{\cup}&
C_{\LD}^2(Z(U)(A))\nonumber\\
x\cup y &:= & \frac{1}{2}(Q(x+y)-Q(x)-Q(y))\nonumber
\end{eqnarray}
which is a symmetric bilinear form.

\noindent
\text{\bf Remark}
Alternatively, we can choose an arbitrary lift
$\widetilde{c}$ of $c$.
Now $\pr(d^2(\widetilde{c}))$ is an inhomogeneous
polynomial of degree two.
We recover $Q$ by taking the homogeneous part
of degree two.

\vspace{3mm}

\paragraph{Lemma}
\label{lem:non-abelian-h1}
Under the identification
 $
 C^1_{\LD}(U(A))=
C^1_{\LD}(U^{\ad}(A)) \oplus C^1_{\LD}(Z(U)(A))
$, we have
$$
Z^1_{\LD}(U(A)) = 
\{(x, y)\in
C^1_{\LD}(U^{\ad}(A)) \oplus C^1_{\LD}(Z(U)(A))|
d^2x = 0,~x\cup x + d^2 y = 0
\}.
$$

\begin{proof}
It is obvious from the definition
of $d^2$
and $Q$.
The projection of $d^2(x,y)$ to $C^2_{\LD}(U^{\ad}(A))$ is $d^2x$; and
the projection of $d^2(x,y)$ to $C^2_{\LD}(Z(U)(A))$ is $x\cup x + d^2y$.
\end{proof}

Write $H^i_{\LD}(U^{\ad}(A))$
for $$Z^i_{\LD}(U^{\ad}(A))/B^i_{\LD}(U^{\ad}(A))$$
    and write
$H^i_{\LD}(Z(U)(A))$
for $$Z^i_{\LD}(Z(U)(A))/B^i_{\LD}(Z(U)(A)).$$

    \vspace{3mm}

\paragraph{Lemma}
\label{lem:h1-cup}
The pairing $\cup$ on the cochain level 
induces a symmetric pairing on the cohomology level
\begin{eqnarray}
H_{\LD}^1(U^{\ad}(A))\times
H_{\LD}^1(U^{\ad}(A)) &\xrightarrow{\cup}&
H_{\LD}^2(Z(U)(A)).\nonumber
\end{eqnarray}

\begin{proof}
It suffices to show for all
$x\in Z^1_{\LD}(U^{\ad})(A)$
and $y\in B^1_{\LD}(U^{\ad})(A)$,
$Q(x+y)-Q(x)\in B^2_{\LD}(Z(U)(A))$.

Let $\widetilde{x}\in C^1_{\LD}(U(A))$
be the unique extension of $x$
such that $\pr \widetilde{x}= 0$.
The cochain $\widetilde{x}$
represents a group homomorphism
$\rho_{\widetilde{x}}: \langle x_0,\cdots,x_{n+1}\rangle \to U(A)\rtimes \langle x_0,\cdots,x_{n+1}|R\rangle$
such that $\rho_{\widetilde{x}}(R)=1$ mod $Z(U)(A)$.
More explicitly,
we define $\rho_{\widetilde{x}}(x_i)=(\exp(\wt x(x_i)), x_i)$ where $\exp$ is the truncated exponential map (the inverse to the truncated log map).
Since $y$ is a coboundary,
there exists $n\in U(A)$
such that
$n \rho_{\widetilde{x}} n^{-1}$
is represented by a cocycle $(x+y,f)$ extending $x+y$
(we are exploiting the abelian coefficients here).
We have
$n \rho_{\widetilde{x}}(R) n^{-1} \rho_{\widetilde{x}}(R)^{-1}=1\in U(A)\rtimes \langle x_0,\cdots,x_{n+1}|R\rangle$
since $\rho_{\widetilde{x}}(R)$ lies in the center of $U(A)$.
Since $Q(x+y)-d^2(f)=n \rho_{\widetilde{x}}(R) n^{-1}$
and $Q(x)=\rho_{\widetilde{x}}(R)$,
we have $Q(x+y)-Q(x)=d^2f\in B^2_{\LD}(Z(U)(A))$.
\end{proof}

\vspace{3mm}
Recall $Z^1_{\LD}(U(A))$ and
$Z^1_{\LD}(\Lie U(A))$ are both subsets of $C^1_{\LD}(U(A))$.
\paragraph{Lemma}
\label{lem:incl-into-lie-u}
If $Z(U)(\bF)\cong \bF$, then
$$
Z^1_{\LD}(U(\bF)) \subset
Z^1_{\LD}(\Lie U(\bF))
$$
that is, the non-abelian cocycles
with $U(\bF)$-coefficients
are automatically
abelian cocycles with $(\Lie U(\bF))$-coefficients.

\begin{proof}
We have remarked in \ref{rem:ld-underlying-group}(2) that
$C^1_{\LD}(U(\bF))$ and $C^1_{\LD}(\Lie U(\bF))$
have the same underlying space.
By Lemma 
\ref{lem:non-abelian-h1},
an element of $Z^1_{\LD}(U(\bF))$
is a pair $(x,y)$ such that $d^2x=0$
and $x\cup x+d^2 y = 0$.
By our assumption,
$C^2_{\LD}(Z(U)(\bF)) = H^2(G_K, Z(U)(\bF))$
(Corollary \ref{cor:h2-disc})
and thus $B^2_{\LD}(Z(U)(\bF))=0$ and $d^2=0$.
So $d^2y=0$ automatically,
and $(x,y)$ defines an element
of $Z^1_{\LD}(\Lie U(\bF))$.
\end{proof}

\vspace{3mm}

\section{An analysis of cup products}

\label{sec:FC}

Let $E$ be a $p$-adic field with ring of integers $\cO_E$, residue field $\bF$ and uniformizer $\varpi$.

Let $U$ be a smooth connected unipotent group of class $2$ over $\spec \cO_E$,
with center $Z(U)\cong \Ga$.
Write $U^{\ad}$ for $U/Z(U)$.
Assume $U^{\ad}\cong \Ga^{\oplus s}$ is a vector group.

\vspace{3mm}

\subsubsection{Definition}
\label{def:mildly-regular}
Let $K'$ be a $p$-adic field.
A Lyndon-Demu\v{s}kin action $G_{K'}\to \Aut(U)(\cO_E)$
is said to be \textit{mildly regular}
if the following are satisfied:
\begin{itemize}
\item[(MR1)] $H^0(G_{K'}, U^{\ad}(E)) = 0$;
\item[(MR2)] The bilinear pairing
$$
\cup_{\bF}:
C^1_{\LD}(U^{\ad}(\bF))
\times
C^1_{\LD}(U^{\ad}(\bF))
\to C^2_{\LD}(Z(U)(\bF))
$$
is non-degenerate.
\end{itemize}

\paragraph{\bf Remark}
In practice $U$ is the unipotent radical of a parabolic subgroup of a reductive group and (MR2) is equivalent to ``$p$ being not too small''.
We worked out the $G_2$-case in Appendix \ref{sec:delta}, and showed that if $p>5$, (MR2) always holds.
The same proof but with more complicated notation should work for general reductive groups.

In general, (MR2) can be checked by computer algebra systems because
it is a finite field vector space question
for a finite number of small $p$'s.
We include an algorithm (written in SageMath) in Appendix \ref{sec:sage}.

The following proposition is a summary of Appendix \ref{sec:delta}:

\vspace{3mm}

\text{\bf Proposition}
\label{prop:summary-appendix}
If $U$ is the unipotent radical of the short root parabolic of $G_2$ or the quotient of the unipotent radical
of the long root parabolic of $G_2$ by its center,
then (MR2) is true when $p\ge 5$.
\vspace{3mm}

\subsubsection{Definition}
\label{def:slightly-less}
Given a tuple of labeled Hodge-Tate weights (see \cite[Subsection 1.12]{EG23} for the definition)
$\underline \lambda$,
we say $\underline \lambda$
is slightly less than $0$
if for each $\sigma:K'\hookrightarrow \bQp$,
$\lambda_{\sigma}$ consists of 
non-positive integers,
and for at least one $\sigma$,
$\lambda_\sigma$ consists of negative integers.
(The cyclotomic character has Hodge-Tate weight $-1$.)

\vspace{3mm}
\subsubsection{Proposition}
\label{prop:G2-MR}
Assume $p\ge 5$.
If $U$ is the unipotent radical of the short root parabolic of $G_2$ or the quotient of the unipotent radical
of the long root parabolic of $G_2$ by its center,
then $G_{K'}\to \Aut(U)(\cO_E)$ is mildly regular
if $U^{\ad}(E)$ is Hodge-Tate of
    labeled Hodge-Tate weights slightly
    less then $0$.

\begin{proof}
If $H^0(G_{K'}, U^{\ad}(E)) \ne 0$,
then for all embeddings $\sigma:K\hookrightarrow \bQp$, $0\in \lambda_\sigma$.

The proposition now follows from 
the Proposition in Remark
\ref{prop:summary-appendix}
and Appendix \ref{sec:delta}.
\end{proof}

\subsection{Cup products mod $\varpi$}

\subsubsection{Lemma}
The image of
$Z^1_{\LD}(U^{\ad}(\cO_E)) \to
C^1_{\LD}(U^{\ad}(\bF))$
has codimension at most $\dim_{E} U^{\ad}(E)$.

\begin{proof}
Say $\dim_{\bF} C^1_{\LD}(U^{\ad}(\bF))=
\rank_{\cO_E}C^1_{\LD}(U^{\ad}(\cO_E))=N$.

Since $Z^1_{\LD}(U^{\ad}(\cO_E))$
is the kernel of
$C^1_{\LD}(U^{\ad}(\cO_E))
\to C^2_{\LD}(U^{\ad}(\cO_E))$,
and $\rank_{\cO_E}C^2_{\LD}(U^{\ad}(\cO_E))=\dim_E U^{\ad}(E)$,
we have
$$
\rank_{\cO_E} Z^1_{\LD}(U^{\ad}(\cO_E))\ge N-\dim_E U^{\ad}(E).
$$
Since $C^2_{\LD}(U^{\ad}(\cO_E))$
is torsion-free,
$Z^1_{\LD}(U^{\ad}(\cO_E))$
is saturated in $C^1_{\LD}(U^{\ad}(\cO_E))$,
and is thus a direct summand.
In particular,
the image of $Z^1_{\LD}(U^{\ad}(\cO_E))$
in $C^1_{\LD}(U^{\ad}(\bF))$
has dimension $\ge N-\dim_E U^{\ad}(E)$.
\end{proof}

\subsubsection{Lemma}
\label{lem:char-p-ker-estimate}
If 
$$
\cup_{\bF}:
C^1_{\LD}(U^{\ad}(\bF))
\times
C^1_{\LD}(U^{\ad}(\bF))
\to C^2_{\LD}(Z(U)(\bF))
$$
is non-degenerate, then
the kernel of 
$$
\cup_{\bF}:
Z^1_{\LD}(U^{\ad}(\cO_E))/\varpi
\times
Z^1_{\LD}(U^{\ad}(\cO_E))/\varpi
\to C^2_{\LD}(Z(U)(\bF))
$$
has dimension at most $\dim_E U^{\ad}(E)$.

\vspace{3mm}
\text{\bf Remark}
Note that $Z^1_{\LD}(U^{\ad}(\bF))\ne Z^1_{\LD}(U^{\ad}(\cO_E))/\varpi$ in general.

The kernel of a bilinear pairing is also called the annihilator.

\begin{proof}
For ease of notation, write
$C$ for $C^1_{\LD}(U^{\ad}(\bF))$,
and write 
$Z$ for the image of
$Z^1_{\LD}(U^{\ad}(\cO_E))$
in $C$.
Note that $Z\cong Z^1_{\LD}(U^{\ad}(\cO_E))/\varpi$ by the proof of the above lemma.

Let $K\subset Z$ be the kernel of $\cup_{\bF}$.
Since the cup product on $C$
is non-degenerate, there exists
a subspace $F\subset C$
of dimension equal to that of $K$,
such that the restriction of the cup product to $(F + K)$
is also non-degenerate.
Since $F \cap Z = 0$,
$\dim C \ge \dim (F + Z) = \dim Z + \dim F = \dim Z + \dim K$.
The lemma now follows from the previous lemma.
\end{proof}

We also record the following lemma whose proof is similar.

\subsubsection{Lemma}
\label{lem:char-p-ker-estimate-variant}
(1)
The image of
$Z^1_{\LD}(U^{\ad}(\bF)) \to
C^1_{\LD}(U^{\ad}(\bF))$
has codimension at most $\dim_{E} U^{\ad}(E)$.

(2)
If 
$$
\cup_{\bF}:
C^1_{\LD}(U^{\ad}(\bF))
\times
C^1_{\LD}(U^{\ad}(\bF))
\to C^2_{\LD}(Z(U)(\bF))
$$
is non-degenerate, then
the kernel of 
$$
\cup_{\bF}:
Z^1_{\LD}(U^{\ad}(\bF))
\times
Z^1_{\LD}(U^{\ad}(\bF))
\to C^2_{\LD}(Z(U)(\bF))
$$
has dimension at most $\dim_E U^{\ad}(E)$.

\subsection{General cup products in group cohomology}~

In this subsection, we give a reinterpretation of Definition \ref{def:LD-cup},
which is convenient for theoretic applications.

Let $V$ be a unipotent algebraic group of class $2$ over $\cO_E$.
Let $\Gamma$ be an abstract group, together with a homomorphism $\theta: \Gamma \to \Aut(V)(\cO_E)$.
By the Lie correspondence,
$\Aut(\Lie V)\cong \Aut(V)$,
and thus $\theta$ induces
a $\cO_E$-linear $\Gamma$-action on $\Lie V$
which respects Lie brackets.

We fix a grading $\Lie V = V_1 \oplus V_2$ such that $[V_1, V_1] \subset V_2$, and $[V, V_2]=0$.
We will write $V$ for $V(\cO_E)$ for simplicity.

Let $f: \Gamma \to V$ be a crossed homomorphism.
By definition, for any $g_1,g_2\in \Gamma$, $f(g_1g_2)=f(g_1)g_1f(g_2)$.
Write $c=c_1+c_2$ for $\log(f)$, where $c_1$ values in $V_1$ and $c_2$ values in $V_2$.
By the Baker-Campbell-Hausdorff formula, we have
\begin{align*}
\text{($*$)}\hspace{3mm}c(gh)&=c(g)+g c(h) + [c(g), g c(h)]/2\\
&=(c_1(g)+ g c_1(h)) + (c_2(g)+g c_2(h)) + [c_1(g), g c_1(h)]/2
\end{align*}

\subsubsection{Lemma}
Let $a, b \in H^1(\Gamma, V_1)$ be two crossed homomorphisms.
The $2$-cochain $B(a, b): (g,h)\mapsto [a(g), g b(h)]$
is a $2$-cocycle.

\begin{proof}
By definition, we have
\begin{align*}
d^2(B(a, b))(g_1, g_2, g_3) &= 
g_1[a(g_2), g_2 b(g_3)] -
[d^1a(g_1, g_2), g_1g_2b(g_3)]
\\
&\hspace{5mm}+[a(g_1), g_1d^1b(g_2, g_3)]
+[a(g_1), g_1 b(g_2)]\\
&=g_1[a(g_2), g_2 b(g_3)] -
[a(g_1)+ g_1 a(g_2), g_1g_2b(g_3)]
\\
&\hspace{5mm}+[a(g_1), g_1b(g_2) + g_1g_2 b(g_3)]
+[a(g_1), g_1 b(g_2)]\\
&=0 \qedhere
\end{align*}
\end{proof}

For crossed homomorphisms
$a\in H^1(\Gamma, V_1)$,
define $Q(a):= B(a,a)$.
By comparing $(*)$
and paragraph \ref{def:LD-cup},
it is not hard to see
the $Q(-)$ defined in this
subsection coincides
with that of \ref{def:LD-cup}
for $1$-cocycles
when $\Gamma$ is the discrete Demu\v{s}kin group.

Since $a\cup b:= (Q(a+b, a+b)-Q(a)-Q(b))/2=(B(a,b) + B(b,a))/2$, we have $a\cup b\in H^2(\Gamma, V_2)$.
Again the cup product defined in this subsection coincides with
the \ref{def:LD-cup} when the settings overlap.

\subsubsection{Lemma}
\label{lem:equivariance}
Let $\Gamma' \subset \Gamma$ be
a normal subgroup of finite index.
Write $\Delta$ for $\Gamma/\Gamma'$.

The cup product $\cup:H^1(\Gamma', V_1)\times H^1(\Gamma', V_1)\to H^2(\Gamma', V_2)$ is $\Delta$-equivariant.

\begin{proof}
Let $a, b\in H^1(\Gamma^1, V_1)$, and let 
$\sigma\in \Gamma$.
We have by definition $\sigma\cdot a(g)=\sigma a(\sigma^{-1} g \sigma)$, and
$\sigma\cdot B(a,b)(g, h)=\sigma B(a,b)(\sigma^{-1} g\sigma, \sigma^{-1} h \sigma)$ (see \cite[Section I.5.8]{Se02}).
We immediately have $\sigma \cdot B(a,b) = B(\sigma\cdot a, \sigma \cdot b)$.
\end{proof}

\subsubsection{Example: the completely split case}
\label{example:trivial}
In this paragraph we analyze the special case where
the $G_{K'}$ action on $U^{\ad}(\bF)\cong \Lie U^{\ad}(\bF)$ is trivial
and $H^2(G_{K'}, Z(U)(\bF))=Z(U)(\bF)=\bF$.
It will be used in the proof of Theorem \ref{thm:nontrivial}.

Since the center of $\Lie U$
is one-dimensional,
the Lie bracket
$$
\Lie U^{\ad}(\bF) \times
\Lie U^{\ad}(\bF) \xrightarrow{[-,-]} Z(U)(\bF)
$$
is a non-degenerate, alternating pairing.
Choose a basis $\{e_1, \cdots, e_k, e_1', \cdots, e_k'\}$
of $\Lie U^{\ad}(\bF)$
such that $[e_i', e_j']=[e_i,e_j]=0$ and $[e_i, e_j']=-[e_i', e_j]=\delta_{i,j}$.
Since by assumption the $G_{K'}$-action on $U^{\ad}(\bF)$
is trivial, the cup product
$$
\cup:
H^1(G_{K'}, U^{\ad}(\bF))
\times
H^1(G_{K'}, U^{\ad}(\bF))
\to
H^2(G_{K'}, Z(U)(\bF))
$$
is isomorphic to
the (exterior) direct sum of 
cup products
$$
\cup_i:
H^1(G_{K'}, \bF e_i \oplus \bF e_i')
\times
H^1(G_{K'}, \bF e_i \oplus \bF e_i')
\to
H^1(G_{K'}, \bF)
$$
Write 
$\wedge$
for the usual cup product
$H^1(G_{K'}, \bF) \times H^1(G_{K'}, \bF) \to H^2(G_{K'}, \bF)$ which appears in local Tate duality.
By definition, 
for $a, b\in H^1(G_{K'}, \bF)$
we have
\begin{align*}
Q(a e_i + b e_i') &= B(a e_i + b e_i', a e_i + b e_i')\\
& =((g,h) \mapsto [a(g) e_i + b(g)e_i', a(h) e_i + b(h)e_i'])\\
&= ((g, h)\mapsto (a(g)b(h)-b(g)a(h))\\
&= a \wedge b - b \wedge a \\
&= 2 a \wedge b
\end{align*}
and thus
for $a_1, b_1, a_2, b_2\in H^1(G_{K'}, \bF)$
$$
B(a_1 e_i+ b_1 e_i', a_2 e_i + b_2 e_i')
= 2 (a_1\wedge b_2 + a_2 \wedge b_1)
$$
Since $\wedge$ is a non-degenerate pairing, $B$ is also a non-degenerate pairing.

\subsection{Nontriviality of cup products}~

\subsubsection{Theorem}
\label{thm:nontrivial}
Let $K'/K$ be a finite Galois extension of $p$-adic fields of prime-to-$p$ degree.
Let $r :G_K \to \Aut(U)(\cO_E)$ be a continuous group homomorphism.

If $r|_{G_{K'}}$ is Lyndon-Demu\v{s}kin and mildly regular, then
one of the following are true:
\begin{itemize}
\item[(i)]
$H^2(G_K, Z(U)(\bF)) = 0$, or
\item[(ii)] the symmetric bilinear pairing
$$
H^1(G_K, U^{\ad}(\cO_E)) \otimes \bF
\times
H^1(G_K, U^{\ad}(\cO_E)) \otimes \bF
\to H^2(G_K, Z(U)(\cO_E)) \otimes \bF
$$
is non-trivial.
\end{itemize}

\vspace{3mm}

\noindent
\text{\bf Remark}
Note that $H^1_{\LD}(U^{\ad}(\cO_E)) \cong H^1(G_{K'}, U^{\ad}(\cO_E))$, and
$H^1(G_{K'}, U^{\ad}(\cO_E))^{G_K}=H^1(G_{K}, U^{\ad}(\cO_E))$.
The symmetric pairing in the theorem
is the restriction to $H^1(G_{K'}, U^{\ad}(\cO_E))$
of the symmetric pairing
defined in Lemma \ref{lem:h1-cup}.

\begin{proof}
Assume $H^2(G_K, Z(U)(\bF)) \ne 0$.
Consider the diagram
$$
\xymatrix{
H^1(G_K, U^{\ad}(\cO_E))
\times
H^1(G_K, U^{\ad}(\cO_E))
\ar[r]\ar@{^{(}->}[d] &
H^2(G_K, Z(U)(\cO_E))\ar[d]^{\cong}
\\
H^1(G_{K'}, U^{\ad}(\cO_E))
\times
H^1(G_{K'}, U^{\ad}(\cO_E))
\ar[r] &
H^2(G_{K'}, Z(U)(\cO_E))
\\
Z^1_{\LD}(U^{\ad}(\cO_E))
\times
Z^1_{\LD}(U^{\ad}(\cO_E))\ar[u]
\ar[r] &
C^2(Z(U)(\cO_E))\ar[u]
}
$$
By Lemma \ref{lem:char-p-ker-estimate}, the kernel of
$$
H^1(G_{K'}, U^{\ad}(\cO_E))/\varpi
\times
H^1(G_{K'}, U^{\ad}(\cO_E))/\varpi
\to
H^2(G_{K'}, Z(U)(\bF))
$$
has $\bF$-dimension at most $\dim_E U^{\ad}(E)$.
Write $\Delta$ for $G_{K}/G_{K'}$, which acts on $H^1(G_{K'}, U^{\ad}(\cO_E))$
with fixed-point subspace $H^1(G_{K}, U^{\ad}(\cO_E))$.

By an averaging argument (explained below), the kernel
of
$$
H^1(G_{K}, U^{\ad}(\cO_E))/\varpi
\times
H^1(G_{K}, U^{\ad}(\cO_E))/\varpi
\to
H^2(G_{K}, Z(U)(\bF))
$$
is contained in the kernel of
$$
H^1(G_{K'}, U^{\ad}(\cO_E))/\varpi
\times
H^1(G_{K'}, U^{\ad}(\cO_E))/\varpi
\to
H^2(G_{K'}, Z(U)(\bF))
$$
and thus has $\bF$-dimension at most $\dim_E U^{\ad}(E)$.
(Let $[c]\in H^1(G_{K}, U^{\ad}(\cO_E))/\varpi$ and suppose
$[c]\cup [d]=0$ for all $[d] \in H^1(G_{K}, U^{\ad}(\cO_E))/\varpi$.
Let $[c']\in H^1(G_{K'}, U^{\ad}(\cO_E))/\varpi$.
Then $\sum_{\sigma\in \Delta}\sigma([c]\cup [c'])= [c]\cup \sum_{\sigma\in \Delta}[c'] = 0$.
Since $H^2(G_K, Z(U)(\bF)) \ne 0$, we have
$H^2(G_K, Z(U)(\bF)) = H^2(G_{K'}, Z(U)(\bF))$ and thus
$\sum_{\sigma\in \Delta}\sigma([c]\cup [c']) = \#\Delta \sigma([c]\cup [c'])$.)

We remark that as a finitely generated module over a DVR,
$H^1(G_K, U^{\ad}(\cO_E))$ is the direct sum of its torsion-free part and its torsion part;
and $H^1(G_K, U^{\ad}(E))=H^1(G_K, U^{\ad}(\cO_E))_{\text{torson-free}}\otimes_{\cO_E}E$.

By the local Euler characteristic,
\begin{eqnarray}
\dim_E H^1(G_K, U^{\ad}(E))
&=& 
\dim_E H^2(G_K, U^{\ad}(E))
+\dim_E H^0(G_K, U^{\ad}(E))
+ \dim_EU^{\ad}(E)[K:\Qp]\nonumber\\
&\ge& 
\dim_E H^2(G_K, U^{\ad}(E))
+ \dim_EU^{\ad}(E).\nonumber
\end{eqnarray}
We will now consider two possibilities: $H^2(G_K, U^{\ad}(\bF)) \ne 0$ and $H^2(G_K, U^{\ad}(\bF)) = 0$.

{\bf Case $H^2(G_K, U^{\ad}(\bF)) \ne 0$.} \hspace{5mm}
Since $H^2(G_K, U^{\ad}(\bF))\ne 0$,
$H^2(G_K, U^{\ad}(\cO_E))$ is non-trivial.
So either we have
$\dim_E H^2(G_K, U^{\ad}(E)) > 0$,
or $H^2(G_K, U^{\ad}(\cO_E))$
has non-trivial torsion.
If $H^2(G_K, U^{\ad}(\cO_E))$
has non-trivial torsion, then
again by the local Euler characteristic (mod $\varpi$ version),
$H^1(G_K, U^{\ad}(\cO_E))$
also has non-trivial torsion.
In either case,
$\dim_{\bF}H^1(G_K, U^{\ad}(\cO_E))/\varpi\ge \dim_EU^{\ad}(E)+1$.
So the kernel of the cup product is a proper subspace of
$H^1(G_K, U^{\ad}(\cO_E))/\varpi$.

{\bf Case $H^2(G_K, U^{\ad}(\bF)) = 0$.} \hspace{5mm}
By Nakayama's Lemma, 
$H^2(G_K, U^{\ad}(\cO_E))=0$.
By \cite{EG23},
there exists a perfect $\cO_E$-complex
$[C^0\to C^1\to C^2]$ concentrated in degrees $[0,2]$ which computes $H^\bullet(G_K, U^{\ad}(\cO_E))$.
By the universal coefficient theorem,
there exists a short exact sequence
$$
0\to
H^1(C^\bullet)\otimes \bF \to H^1(C^\bullet \otimes \bF)
\to Tor^{\cO_E}_1(H^2(C^\bullet), \bF)
\to 0
$$
So
$H^1(G_K, U^{\ad}(\cO_E))\otimes_{\cO_E}\bF=H^1(G_K, U^{\ad}(\bF))$.
We assume (i) and (ii) are false, and try to get a contradiction.
The kernel of 
$$
H^1(G_K, U^{\ad}(\cO_E)) \otimes \bF
\times
H^1(G_K, U^{\ad}(\cO_E)) \otimes \bF
\to H^2(G_K, Z(U)(\cO_E)) \otimes \bF
$$ has dimension $h^1:=\dim_{\bF} H^1(G_K, U^{\ad}(\bF))$.
By the local Euler characteristic, 
$$\text{(*)} \hspace{5mm}h^1=\dim_EU^{\ad}(E)[K:\Qp] + \dim_{\bF} H^0(G_K, U^{\ad}(\bF)).$$
By
Lemma \ref{lem:char-p-ker-estimate-variant},
the kernel $k_Z$ of 
$$
Z^1_{\LD}(U^{\ad}(\bF))
\times
Z^1_{\LD}(U^{\ad}(\bF))
\to H^2(G_{K'}, Z(U)(\bF))
$$
has
dimension at most $\dim_EU^{\ad}(E)$.
Since the cup product is trivial on $H^1(G_K, U^{\ad}(\bF))$,
we have
$$\text{(**)} \hspace{5mm}\dim k_Z \ge \dim H^1(G_K, U^{\ad}(\bF)) + \dim B^1_{\LD}(U^{\ad}(\bF))= h^1 +  \dim B^1_{\LD}(U^{\ad}(\bF)).$$
Combining ($*$) and ($**$),
we have
$$
\dim_E U^{\ad}(E) \ge \dim_{\bF} k_Z \ge \dim_E U^{\ad}(E) [K:\Qp] + \dim_{\bF}H^0(G_K, U^{\ad}(\bF))+  \dim B^1_{\LD}(U^{\ad}(\bF))
$$
So we conclude that
\begin{align*}
1 &= [K:\Qp] \\
0 &= H^0(G_K, U^{\ad}(\bF))\\
0 &= B^1_{\LD}(U^{\ad}(\bF))
\end{align*}
In particular, we have $H^0(G_{K'}, U^{\ad}(\bF))=U^{\ad}(\bF)$, and the kernel of the cup product
on $H^1(G_{K'}, U^{\ad}(\bF))$
has dimension exactly $\dim_E U^{\ad}(E)$.
However, by Example \ref{example:trivial}, the cup product 
on $H^1(G_{K'}, U^{\ad}(\bF))$
is non-degenerate by local Tate duality.
\end{proof}

Theorem \ref{thm:nontrivial} is used in the following scenerio.

\vspace{3mm}

\paragraph{Lemma}
\label{lem:K-primed}
Let $L$ be a split reductive group over $\bF$.
Let $r:G_K\to L(\bF)$ be a Galois representation valued in $L$.
Let $r^{ss}$ be the semi-simplification of $r$.
Write $G_{K'}$ for the kernel of $r^{ss}$.
Then the degree $[K':K]$ divides $(q-1)^r \# W_L$
where 
\begin{itemize}
\item $r$ is the rank of $L$,
\item $q$ is a power of $p$, and
\item $\# W_L$ is the cardinality of the Weyl group of $L$.
\end{itemize}

\begin{proof}
By \cite{L22}, $r^{ss}$ is tamely ramified and factors through the
normalizer of a maximal torus of $L$ (after possibly extending the base field).
\end{proof}

In particular, if $L=G_2$ and $p>3$,
the kernel of $r^{ss}$ defines a Galois extension $K'/K$
of prime-to-$p$ degree;
and $r|_{G_{K'}}$ is Lyndon-Demu\v{s}kin since it has trivial semi-simplification.

\vspace{3mm}

\section{\small Non-abelian obstruction theory via Lyndon-Demu\v{s}kin cocycle group with external Galois action}
\label{sec:manipulate}

Let $K/\Qp$ be a $p$-adic field.
Let $E/\Qp$ be a finite extension
with ring of integers $\cO_E$,
residue field $\bF$,
and uniformizer $\varpi$.

Let $L$ be a split reductive group over $\cO_E$.
Fix a Galois representation
$$
r^{\circ}: G_K \to L(\cO_E)
$$
throughout this section.

Let $U$ be a unipotent group over $\cO_E$
whose adjoint group is abelian.
Let $Z(U)$ be the center of $U$.
The adjoint group $U^{\ad}$
is defined to be $U/Z(U)$.

Fix a group scheme homomorphism
$\phi:L\to \Aut(U)$ throughout this section.
In particular, there is a Galois action
$\phi(r^{\circ}): G_K\xrightarrow{r^{\circ}} L(\cO_E)
\xrightarrow{\phi(\cO_E)} \Aut(U)(\cO_E)$.
We will talk about non-abelian Galois cohomology
$H^{\bullet}(G_K, U(\cO_E))$
and
$H^{\bullet}(G_K, U(\bF))$
using this Galois action throughout this
section.

Let $K'/K$ be a prime-to-$p$,
finite Galois extension of $K$ containing
the group of $p$-th root of unity,
such that
$r^{\circ}(G_{K'})\subset L(\cO_E)$
is a pro-$p$ group.
Write $\Delta$ for $\Gal(K'/K)$.
Set $\Gamma := G_K$, and $H := G_{K'}$.

\subsection{Non-abelian inflation-restriction}
\vspace{3mm}

\paragraph{Non-abelian Galois cohomology}
We recall a few facts about the non-abelian version
of Galois cohomology.
Let
$$
0\to A\to B\to C\to 0
$$
be a short exact sequence of groups with
continuous $\Gamma$-action.
If $A\to B$ is \textit{central}, that is, $A$ is contained in the center of $B$,
then we have a long exact sequence of pointed sets (\cite[Proposition 43, 5.7]{Se02})
\begin{eqnarray}
\begin{split}
1 &\to A^\Gamma \to B^\Gamma \to C^\Gamma \\
& \xrightarrow{} H^1(\Gamma, A)
\to H^1(\Gamma, B) \to H^1(\Gamma, C) \\
& \xrightarrow{\delta} H^2(\Gamma, A)
\end{split}\nonumber
\end{eqnarray}
Let $H \subset \Gamma$ be a closed normal subgroup.
Then there is an exact sequence (\cite[5.8]{Se02})
\begin{equation}
\label{eq:Se0258}
1\to H^1(\Gamma/H, A^H) 
\to H^1(\Gamma, A) \to H^1(H, A)^{\Gamma/H}.
\end{equation}
If $A$ is an abelian group, then the sequence above can be upgraded
to the inflation-restriction exact sequence:
$$
1\to H^1(\Gamma/H, A^H) 
\to H^1(\Gamma, A) \to H^1(H, A)^{\Gamma/H} \to H^2(\Gamma, A^H).
$$

\vspace{3mm}
\paragraph{Theorem} \cite[Theorem 3.15]{Ko02}
Let $\Gamma$ be a profinite group,
$H$ a normal subgroup of finite index,
and $A$ an (abelian) $G$-module whose elements have finite order
coprime to $(\Gamma:H)$. Then
$$
H^n(\Gamma/H, A^H) = 0
$$
for all $n\ge 1$, and the restriction
$$
H^n(\Gamma, A)\to H^n(H, A)^{\Gamma/H}
$$
is an isomorphism.

\vspace{3mm}

Let $R$ be either $\cO_E$ or $\bF$.
For ease of notation, write
$U$ for $U(R)$ in this paragraph.
The fact above implies the following diagram commutes, with exact columns
$$
\xymatrix{
    H^1(\Gamma, Z(U)) \ar[r]^{\cong}_{\res} \ar[d] &
    H^1(H, Z(U))^{\Delta} \ar[d] \\
    H^1(\Gamma, U) \ar@{^{(}->}[r]_{\res} \ar[d]^{\alpha_1} &
    H^1(H, U)^{\Delta} \ar[d]^{\alpha_2} \\
    H^1(\Gamma, U^{\ad}) 
    \ar[r]^{\cong}_{\res} \ar[d]^{\delta_1} &
    H^1(H, U^{\ad})^{\Delta} \ar[d]^{\delta_2} \\
    H^2(\Gamma, Z(U)) \ar@{^{(}->}[r] &
    H^2(H, Z(U))
}
$$
The injectivity of the second line follows from Equation (\ref{eq:Se0258}).

\vspace{3mm}
\paragraph{Proposition}
\label{prop:inf-res}
The restriction map of non-abelian $1$-cocycles
$$
H^1(\Gamma, U) \to H^1(H, U)^{\Delta}
$$
is a bijection.

\begin{proof}
It follows from diagram chasing:
Let $[c] \in H^1(H, U)^{\Delta}$.
Since $\delta_1(\res^{-1}(\alpha_2[c])) = \delta_2(\alpha_2[c]) = 0$,
there exists $[b]\in H^1(\Gamma, U)$ such that
$\alpha_1(\res([b])) = \alpha_2([c])$.
Since $\alpha_2^{-1}(\alpha_2([c]))$ is a $H^1(H, Z(U))^{\Delta}$-torsor,
we can twist $[b]$ to make $\res([b]) = [c]$.
\end{proof}

\vspace{3mm}

\paragraph{Representation-theoretic interpretation of non-abelian $1$-cocycles}
\label{par:non-ab-h1}
Let $\fP$ be a group which is a semi-direct product $\fL\ltimes \fU$.
Let $q_{\fL}: \fP\to \fL$ be the quotient map.
Fix a section $\fL \to \fP$ of $q_{\fL}$, which allows us to identify
(set-theoretically) $\fP$ with $\fU \times \fL$;
and write $q_{\fU}:\fP\to \fU$ be the projection map.
For $g\in \fP$, write $g = g_{\fU}g_{\fL}$ such that $g_{\fU}\in \fU\times \{1\}$
and $g_L\in \{1\}\times \fL$.
Let $\bar\tau: \Gamma \to \fL$ be a group homomorphism.
Let $\tau: \Gamma \to \fP$ be a lifting of $\bar\tau$.
Set $c:= q_{\fU}\circ \tau: \Gamma\to \fU$.
Then
\begin{eqnarray}
c(gh) &=& q_{\fU}(\tau(g)\tau(h))=q_{\fU}(\tau(g)_{\fU}\tau(g)_{\fL}\tau(h)_{\fU}\tau(h)_{\fL})\nonumber\\
&=&q_{\fU}(\tau(g)_{\fU}\tau(g)_{\fL}\tau(h)_{\fU}\tau(g)_{\fL}^{-1}\tau(gh)_{\fL})\nonumber\\
&=&c(g) (\tau(g)_{\fL} c(h)\tau(g)^{-1}_{\fL})\nonumber\\
&=:&c(g) (\tau(g)_{\fL}\cdot c(h))\nonumber
\end{eqnarray}
is a (non-abelian) crossed homomorphism.
Two liftings $\tau_1$ and $\tau_2$ are equivalent if
there exists an element $n\in \fU$ such that $\tau_1=n \tau_2 n^{-1}$.
So $H^1(\Gamma, \fU)$ classifies liftings $\tau$ of $\bar\tau$ up to equivalence.

\vspace{3mm}

\subsubsection{Lifting characteristic $p$ cocycles via
inflation-restriction}
~

Let $[\bar c] \in H^1(\Gamma, U(\bF))$
be a characteristic $p$ cocycle.
Assume the restriction $[\bar c|_H]
\in H^1(H, U(\bF))$
has a characteristic $0$
lift $[c_h]\in H^1(H, U(\cO_E))$.
We want to build a lift $[c]\in H^1(\Gamma, U(\cO_E))$ of $[\bar c]$
using 
$[c_h]$.

Note that when $U$ is an abelian group,
this can be easily achieved by
taking the average
$$
[c] := \frac{1}{\# \Delta}\sum_{g\in \Delta}g\cdot [c_h].
$$
Here we identify $H^1(\Gamma, U(\cO_E))$
with a subset of $H^1(H, U(\cO_E))$
via Proposition 
\ref{prop:inf-res}.

Such a trick does not work anymore when
$U$ is non-abelian. Nonetheless,
we have the following:

\vspace{3mm}

\paragraph{Lemma}
\label{lem:non-ab-inf-res-lift}
If 
there exists $[c_h]\in H^1(H, U(\cO_E))$
and $[d]\in H^1(\Gamma, U^{\ad}(\cO_E))$
such that
\begin{itemize}
\item 
$\alpha_2([c_h])=\res ([d])$ and
\item
$[c_h]$ mod $\varpi=[\bar c|_H]$
\end{itemize}
 then there exists
$[c]\in H^1(\Gamma, U(\cO_E))$
which is a lifting of $[\bar c]$.

$$
\xymatrix{
    &H^1(\Gamma, Z(U)(\cO_E)) \ar@{^{(}->}[r]_{\res} \ar[d] &
    H^1(H, Z(U)(\cO_E)) \ar[d] \\
    &H^1(\Gamma, U(\cO_E)) \ar@{^{(}->}[r]_{\res} \ar[d]^{\alpha_1} &
    H^1(H, U(\cO_E))\ar[d]^{\alpha_2}  
& \hspace{-10mm} \ni [c_h]
    \\
    [d]\in \hspace{-10mm}&H^1(\Gamma, U^{\ad}(\cO_E)) 
    \ar[r]_{\res} \ar[d]^{\delta_1} &
    H^1(H, U^{\ad}(\cO_E)) \ar[d]^{\delta_2} \\
    &H^2(\Gamma, Z(U)(\cO_E)) \ar@{^{(}->}[r] &
    H^2(H, Z(U)(\cO_E))
}
$$

\begin{proof}
Since
$$
\delta_1([d]) = \delta_2(\alpha_2([c_h]))=0,
$$
$[d] = \alpha_1([c'])$ for some
$[c']\in H^1(\Gamma, U(\cO_E))$.
Since $\res([c'])$ and $[c_h]\in H^1(H, U(\cO_E))$
have the same image in $H^1(H,U^{\ad}(\cO_E))$ (via $\alpha_2$),
it makes sense to talk about the difference
$\res([c'])-[c_h] \in H^1(H, Z(U)(\cO_E))$.
\footnote{$H^1(H, U(\cO_E))$
is a $H^1(H, Z(U)(\cO_E))$-principle
homogeneous space.
}
Consider the following diagram
$$
\xymatrix{
H^1(\Gamma, Z(U)(\cO_E))
\ar[r] \ar@{^{(}->}[d]^{\res}&
H^1(\Gamma, Z(U)(\bF))
\ar[r]^{\delta} \ar@{^{(}->}[d]^{\res}&
H^2(\Gamma, Z(U)(\cO_E)) \ar@{^{(}->}[d]\\
H^1(H, Z(U)(\cO_E))
\ar[r] &
H^1(H, Z(U)(\bF))
\ar[r]^{\delta} &
H^2(H, Z(U)(\cO_E))
}
$$
Let $[\bar c']\in H^1(\Gamma, Z(U)(\bF))$
be the reduction mod $\varpi$ of $[c']$.
Since $\res([\bar c'])-[\bar c_h]$
has a lift, $$\delta(\res([\bar c']-[\bar c_h]))=0\in H^2(H, Z(U)(\bF))$$
by the exactness of the second row of the diagram above.
Therefore
$$\delta([\bar c']-[\bar c])
= \delta(\res([\bar c']-[\bar c]))
= \delta(\res([\bar c']-[\bar c_h]))=0$$
and $[\bar c']-[\bar c]\in H^1(\Gamma, Z(U)(\bF))$
has a characteristic $0$ lift $[x]$,
and $[c]:=[c']-[x]$ is a lift of $[\bar c]$.
\end{proof}

The purpose of the whole Section \ref{sec:manipulate}
is to prove Theorem \ref{thm:non-abelian-obstruction}, which 
extends the above lemma.

\subsection{External Galois action on the
Lyndon-Demu\v{s}kin cocycle group}
\label{ss:external-Galois}
~

\vspace{3mm}

The earlier subsection shows there is
an identification
$$
H^1(\Gamma, U(\cO_E)) \cong H^1(H, U(\cO_E))^{\Delta}.
$$
The goal of this subsection is to 
upgrade this identification
to the cochain level.

Since the Galois action
$$
\phi(r^{\circ})|_{G_{K'}}:G_{K'} \to
U(\cO_E)
$$
is Lyndon-Demu\v{s}kin,
we have a Lyndon-Demu\v{s}kin
complex $C^{\bullet}_{\LD}(U(\cO_E))$
computing $H^{\bullet}(H, U(\cO_E))$.
Recall (\ref{sss:nilcoef}) that a $1$-cochain
$c\in C^1_{\LD}(U(\cO_E))$
is the same as a function
$$
\mathfrak{c}: \langle x_0, \cdots, x_{n+1}\rangle \to (\Lie U)(\cO_E)
$$
such that
$$
\mathfrak{c}(gh) = \mathfrak{c}(g) + 
g\cdot \mathfrak{c}(h) + \frac{1}{2}[
\mathfrak{c}(g), g\cdot\mathfrak{c}(h)]
$$
for all $g,h$; or, equivalently, a function
$$
c: \langle x_0, \cdots, x_{n+1}\rangle \to  U(\cO_E)
$$
such that
$$
    c(gh) = c(g)(g\cdot c(h))
$$
for all $g,h$.

A cochain $c:\langle x_0, \cdots, x_{n+1}\rangle\to U(\cO_E)$
lies in
$Z^1_{\LD}(U(\cO_E))$
if and only if it factors through
    the (discrete) Demu\v{s}kin group
    $\langle x_0, \cdots, x_{n+1} | R\rangle$
    (see the proof of Proposition \ref{prop:HLDvsH}).

Let $c\in Z^1_{\LD}(\cO_E)$,
regarded as a function 
$\langle x_0, \dots, x_{n+1}|R\rangle
\to U(\cO_E)$.
Since $U(\cO_E)$ is a pro-$p$ group,
the crossed homomorphism
necessarily
factors through the pro-$p$ completion,
that is, we have a commutative diagram
$$
\xymatrix{
    &\langle x_0, \dots, x_{n+1}|R
    \rangle \ar[d]^{\pi}\ar[r]^{\hspace{5mm}c}
    & U(\cO_E) \\
    G_{K'}(p)\ar@{=}[r] &\widehat{
    \langle x_0, \dots, x_{n+1}|R\rangle
    }^p\ar[ru]^{\widehat{c}}
}
$$

Since we have identified
the pro-$p$ quotient of
    $G_{K'}$ with the pro-$p$ completion of $\langle x_0, \cdots, x_{n+1} | R\rangle$,
 we can define,
 for each $g\in G_K$, an
     automorphism $\alpha_g$ of $Z^1_{\LD}(U(\cO_E))$ via
$$
\alpha_g(c) := (h\mapsto g\cdot \widehat{c}(g^{-1}\pi(h)g)).
$$
So we defined an action of $G_K$
on $Z^1_{\LD}(U(\cO_E))$.

For ease of notation, write
$g\cdot c$ for $\alpha_g(c)$.
Note that
$(g\cdot c)(h) = (\alpha_g(c))(h)$
is different from $g\cdot c(h)$.
We apologize for the confusing notation.

\vspace{3mm}
\paragraph{Remark}
We don't know whether or not
we can define a $G_K$-action
on the whole cochain group $C^1_{\LD}(U(\cO_E))$.
It seems to involve some subtle 
combinatorial group theory.

\vspace{3mm}

\paragraph{Digression} It is curious to know if
the cup product
$$
\cup:Z^1_{\LD}(U^{\ad}(\cO_E)) \times
Z^1_{\LD}(U^{\ad}(\cO_E)) \to C^2_{\LD}(Z(U(\cO_E)))
$$
is compatible with the $G_K$-action.

\vspace{3mm}
This answer would be affirmative if, for
example, for each $g\in G_K$,
the conjugation by $g$
$$
\phi_g : G_{K'} \to G_{K'}
$$
can be lifted to an automorphism of
free pro-$p$ groups on $(n+2)$-generators
$$
\phi_g:\langle x_0, \cdots, x_{n+1}\rangle
\to
\langle x_0, \cdots, x_{n+1}\rangle.
$$
This is closely related to the so-called \textit{Dehn-Nielsen} theorem.
Classically, Dehn-Nielsen is saying all automorphism of the fundamental group
of the genus $g$ closed surface $M_g$ are induced by a homeomorphism.
The algebraic version of Dehn-Nielsen can be formualted as,
under the usual presentation of $F=\langle a_1,b_1,\cdots,a_g,b_g\rangle\to\langle a_1,b_1,\cdots,a_g,b_g|[a_1,b_1]\cdots[a_g,b_g]\rangle\cong\pi_1(M_g)$,
all automorphism of $\pi_1(M_g)$ are induced from an automorphism of the free group $F$.

\vspace{3mm}

\text{\bf Conjecture} (Pro-$p$ Dehn-Nielsen)
All automorphisms of the pro-$p$ completion of $\langle x_0,\cdots,x_{n+1}|R\rangle$ are induced by an automorphism
of the pro-$p$ completion of $\langle x_0,\cdots,x_{n+1}\rangle$.


\vspace{3mm}

\subsection{Constructing non-abelian cocycles}~

Recall that
$
H^1(H, U^{\ad})^{\Delta} = H^1(G_K, U^{\ad})$
where $H=G_{K'}$ and $K'/K$ is a normal extension of prime-to-$p$ degree.
Define
\begin{eqnarray}
(Z^1_{LD})^{\Delta}
&:=& \{x\in Z^1_{\LD}| \text{image of }x
\text{ in }H^1\text{ is contained
in }(H^1)^{\Delta}\}\nonumber\\
&=& \{x\in Z^1_{\LD}|g\cdot x-x\in B^1_{\LD}\text{ for all }g\in G_K\}\nonumber
\end{eqnarray}
Since $Z^1_{\LD}(U^{\ad}(\cO_E))^{\Delta}$
is a submodule of
a finite flat $\cO_E$-module,
it is finite $\cO_E$-flat.

\vspace{3mm}

We keep all notations from the previous subsections.

Assume $Z(U)(\cO_E) = \cO_E$
from now on.

We fix some notation.
The quotient
$U\to U/Z(U)=U^{\ad}$
induces maps
$\ad: Z^1_{\LD}(U(\cO_E))\to Z^1_{\LD}(U^{\ad}(\cO_E))$.

\vspace{3mm}

\subsubsection{Lemma}
\label{lem:cup-0-lift}
Assume 
$p\ne 2$ and
the cup product
$$
\text{($\dagger$)}\hspace{10mm} \cup:
H^1(G_K, U^{\ad}(\cO_E)) \otimes\bF
\times
H^1(G_K, U^{\ad}(\cO_E)) \otimes \bF
\to H^2(G_K, Z(U)(\bF))
$$
is non-trivial.

Let $(\bar c, \bar f)\in Z^1_{\LD}(U(\bF))$
(using Lemma \ref{lem:non-abelian-h1}).
Assume $\bar c\in Z^1_{\LD}(U^{\ad}(\bF))^{\Delta}$. 
If $\bar c$ admits a characteristic $0$
lift $c'\in Z^1_{\LD}(U^{\ad})(\cO_E)$,
then $(\bar c, \bar f)$ admits
a lift $(c, f)\in Z^1_{\LD}(U(\bZp))$
such that $c\in Z^1_{\LD}(U^{\ad}(\bZp))^{\Delta}$.

\begin{proof}
Pick an arbitrary lift $f\in C^1_{\LD}(Z(U)(\cO_E))$ of $\bar f$.
Choose a system of representatives
$\{g_i\}\subset G_K$ of $\Delta$.
By replacing $c'$ by the $\Delta$-average $\frac{1}{\#\Delta}\sum g_i\cdot c'$ + some coboundary
(which is also a lift of $[\bar c]$),
we assume $c'\in Z^1_{\LD}(U^{\ad}(\bZp))^{\Delta}$.

Let $\lambda\in \bZp^{\times}$ be a scalar.

Since the symmetric bilinear pairing ($\dagger$) is
non-trivial, there exists
$y\in Z^1_{\LD}(U^{\ad}(\cO_E))^{\Delta}$ such that
$y\cup y\ne 0$ mod $\varpi$.
Consider
$$
(c'+ \lambda y)\cup (c'+\lambda y) + d^2(f)
=c'\cup c' + d^2(f) + 2 \lambda c'\cup y + \lambda^2 y\cup y
\in C^2(Z(U)(\cO_E))\cong \cO_E
$$
which is a degree two polynomial in $\lambda$
whose Newton polygon has vertices
$(0,+),(1,+\text{~or~}0),(2,0)$
and thus has at least one solution $\lambda_0$ with positive $p$-adic valuation;
here ``$+$'' means a positive number.
Set $(c,f):=(c'+\lambda_0 y, f)$. 


We have $(c,f)\in Z^1_{\LD}(U(\bZp))$ by Lemma \ref{lem:non-abelian-h1} and $c\in Z^1_{\LD}(U^{\ad}(\bZp))^{\Delta}$.
\end{proof}

\vspace{3mm}

\subsubsection{Theorem}
\label{thm:non-abelian-obstruction}
Assume  $p\ne 2$ and
the cup product
$$
\cup:
H^1(G_K, U^{\ad}(\cO_E))\otimes \bF
\times
H^1(G_K, U^{\ad}(\cO_E))\otimes \bF
\to H^2(G_K, Z(U)(\bF))
$$
is non-trivial.

Let $[(\bar c, \bar f)]\in H^1(G_K,U(\bF))$ be a characteristic $p$ cocycle.
If
$[\bar c|_{G_{K'}}]\in H^1(G_{K'}, U^{\ad}(\bF))$ admits
a characteristic $0$ lift in $H^1(G_{K'}, U^{\ad}(\bZp))$,
then $[(\bar c, \bar f)]$
 admits a characteristic $0$
 lift 
 $[(c, f)]\in H^1(G_K, U(\bZp))$.

\begin{proof}
We choose a cocycle
$(\bar c, \bar f)\in Z^1_{\LD}(U(\bF))$ which defines the cohomology class
$[(\bar c, \bar f)]$.
Clearly $\bar c\in Z^1_{\LD}(U^{\ad}(\bF))^{\Delta}$.
Say $[d]\in H^1(G_{K'}, U^{\ad}(\bZp))$
is a lift of $[\bar c]$,
which is defined  by $d\in Z^1_{\LD}(U^{\ad}(\bZp))$.
Write $\bar d$ for the image of $d$
in  $Z^1_{\LD}(U^{\ad}(\bFp))$.
By changing $d$ by a coboundary,
we can assume $\bar d = \bar c$.

Lemma \ref{lem:cup-0-lift}
produces $(c, f)\in Z^1_{\LD}(U(\bZp))$
such that $c\in Z^1_{\LD}(U^{\ad}(\bZp))^{\Delta}$.
Now the theorem follows from
Lemma \ref{lem:non-ab-inf-res-lift}.
\end{proof}

Theorem \ref{thm:non-abelian-obstruction}
is saying that when $U$ is a unipotent group of class $2$
with $1$-dimensional center,
there exists a short exact sequence of
pointed sets
$$
H^1(G_K, U(\bZp))
\to H^1(G_K, U(\bFp))
\xrightarrow{\delta} H^2(G_{K'}, U^{\ad}(\bZp))
$$
under technical assumptions.

Combining Theorem \ref{thm:non-abelian-obstruction}
and Theorem \ref{thm:nontrivial},
we have very nice obstruction theory 
for lifting mod $\varpi$ cohomology classes
in the mildly regular case.

\vspace{3mm}
\subsubsection{Theorem}
\label{thm:obstruction-theory}
Assume $p\ne 2$
and 
$Z(U)(\cO_E) = \cO_E$.
Let $r :G_K \to L(\cO_E)$ be a fixed continuous group homomorphism
and equip $U(\bZp)$ with the $G_K$-action $G_K\xrightarrow{r} L(\bZ_p)\to \Aut(U(\bZp))$.
Let $K'/K$ be a finite Galois extension of 
prime-to-$p$ degree
such that
$r|_{G_{K'}}$ is Lyndon-Demu\v{s}kin and mildly regular.

There is a short exact sequence of pointed sets
$$
H^1(G_K, U(\bZp))
\to H^1(G_K, U(\bFp))
\xrightarrow{\delta} H^2(G_{K'}, U^{\ad}(\bZp))
$$
where $\delta$ has a factorization
$H^1(G_K, U(\bFp))\xrightarrow{z}
H^1(G_{K}, U^{\ad}(\bFp)) \to
H^2(G_{K'}, U^{\ad}(\bZp))$.

\begin{proof}
Write $\Delta$ for $G_K/G_{K'}$.
By the moreover part of Theorem \ref{thm:nontrivial}, there are two cases to consider.

\text{\bf Case I}: the cup product
($\dagger$) $H^1(G_K, U^{\ad}(\bZp)) \otimes \bF
\times
H^1(G_K, U^{\ad}(\bZp)) \otimes \bF
\to H^2(G_K, Z(U)(\bZp)) \otimes \bF$
is non-trivial.
This is a corollary of Theorem \ref{thm:non-abelian-obstruction}.

\text{\bf Case II}: $H^2(G_K, Z(U)(\bF))=0$.
The short exact sequence $0\to Z(U)(\cO_E) \to Z(U)(\cO_E) \to Z(U)(\bF)\to 0$
induces a long exact sequence $H^2(G_K, Z(U)(\cO_E))\to H^2(G_K, Z(U)(\bF))\to 0$.
By Nakayama's lemma, $H^2(G_K, Z(U)(\cO_E))=0$,
and thus $H^2(G_K, Z(U)(\bZp))=0$ by flat base change.

Let $[(\bar c,\bar f)]\in H^1(G_K, U(\bFp))$
be a cohomology class defined by
$(\bar c, \bar f)\in Z^1_{\LD}(U(\bFp))$.

Set $\delta:H^1(G_K, U(\bFp))
\to H^2(G_{K'}, U^{\ad}(\bZp))$
to be the composite
$$
H^1(G_K, U(\bFp)) \xrightarrow{[(\bar c,\bar f)]\mapsto [\bar c]}
H^1(G_K, U^{\ad}(\bFp))
\to H^2(G_{K'}, U^{\ad}(\bZp)).
$$

If $\delta([(\bar c, \bar f)])=0$,
then there exists a lift $c \in Z^1_{\LD}(U^{\ad}(\bZp))$ of $\bar c$.
By replacing $c$ by the $\Delta$-average of $c$,
we assume $c  \in Z^1_{\LD}(U^{\ad}(\bZp))^\Delta$.
Since $H^2(G_K, Z(U)(\bZp))=0$,
$[c\cup c] = 0$ and thus
there exists $g\in C^1_{\LD}(Z(U)(\bZp))^\Delta$
such that
$c \cup c = - d^2(g)$.
Write $\bar g$ for the image of $g$
in $C^1_{\LD}(Z(U)(\bFp))$.
We have $\bar g - \bar f \in Z^1_{\LD}(Z(U)(\bFp))^\Delta$.
Since $H^2(G_K, Z(U)(\bZp))=0$,
there exists a lift $h\in Z^1_{\LD}(Z(U)(\bZp))^\Delta$
of $\bar f - \bar g$.
It is clear that $[(c, g+h)]\in H^1(G_K, U(\bZp))$
is a lift of $[(\bar c, \bar f)]$.
\end{proof}

\subsubsection{Corollary}
\label{cor:obstruction-theory}
Assume $p\ne 2$
and 
$Z(U)(\cO_E) = \cO_E$.
Let $r :G_K \to L(\cO_E)$ be a continuous group homomorphism.

If there exists a
finite Galois extension $K'/K$ of prime-to-$p$ degree
such that
$r|_{G_{K'}}$ is Lyndon-Demu\v{s}kin and mildly regular, then
there is a short exact sequence of pointed sets
$$
H^1(G_K, U(\bZp))
\to H^1(G_K, U(\bFp))
\xrightarrow{\delta} H^2(G_{K}, U^{\ad}(\bZp))
$$
where $\delta$ has a factorization
$H^1(G_K, U(\bFp))\xrightarrow{z}
H^1(G_{K}, U^{\ad}(\bFp)) \to
H^2(G_{K}, U^{\ad}(\bZp))$.

\begin{proof}
It is an immediate consequence of Theorem \ref{thm:obstruction-theory}.
\end{proof}
\vspace{3mm}

\section{The Machinery for lifting non-abelian cocycles}
\label{sec:MACHINE}

Let $K/\Qp$ be a p-adic field.
Let $E/\Qp$ be the coefficient field
with ring of integers $\cO_E$, residue field
$\bF$ and uniformizer $\varpi$.

\subsubsection{Emerton-Gee stacks}
Let $H$ be a connected reductive group over $K$
which splits over a tame extension $K_H/K$.
Denote by $\lsup LH$
the Langlands dual group $\wh H\rtimes\Gal(K_H/H)$
where $\wh H$ is the split connected reductive group over $\bZ$
whose root datum is dual to that of $H$.
The reduced Emerton-Gee stack $\cX_{\lsup LH, \red}$
is a reduced algebraic stack defined over $\bF_p$
(see \cite[Theorem 1]{L23B}).

Moreover, it is proved in many cases that $\cX_{K,\lsup LH, \red}$
is equidimensional of dimension $[K:\Qp]\dim \wh H/B_{\wh H}$
where $B_{\wh H}$ is a Borel of $\wh H$ (see \cite{L23B}).

\subsubsection{Potentially semistable lifting rings}
\label{def:crys-lifting}

Write $L:=\lsup LH$ for simplicity.
Let $\bar r : G_K \to L(\bF)$ be a mod $\varpi$
Langlands parameter, that is,
a continuous group homomorphism such that the composite
$G_K\to\lsup LH(\bFp)\to\Gal(K_H/K)$ is the canonical quotient map.
Let $\underline{\lambda}$ be a Hodge type
and let $\tau$ be a inertial Galois type (see \cite{L23D} for the definitions).
The potentially semistable deformation ring $R^{\underline{\lambda}, \tau, \mathcal{O}}_{\bar r}$
of $\bar r$ of $p$-adic Hodge type $\underline{\lambda}$
is constructed in \cite[Theorem 3.3.8]{BG19}.
It is an $\cO$-flat quotient of the universal lifting ring,
and is equidimensional of dimension $(1+\dim \wh H+[K:\Qp]\dim \wh H/B_{\wh H})$
when $\underline{\lambda}$ is a regular Hodge type.

\vspace{3mm}

\subsection{A geometric argument of
Emerton-Gee}~

\subsubsection{Definition}
\label{def:sgr}
Let $\mathcal{F}$ be a coherent sheaf
over a scheme $X=\spec R$.
We say $\mathcal{F}$ is \textit{sufficiently generically regular}
(= SGR) if for each $s \ge 1$,
the locus
$$
X_s := \{x\in \spec R|\dim \kappa(x)\otimes_R \mathcal{F} \ge s\}
$$
has codimension $\ge s+1$ in $\spec R$.

\vspace{3mm}

\subsubsection{Theorem}
\label{thm:abelian-lift}
Let $X=\spec R$ with $R$ a complete reduced, $\bZ_p$-flat local ring
that is equidimensional of dimension $(1+\dim L+\dim \cX_{L, \red})$.
Let $r^{\univ}:G_K\to L(R)$
be a family of $L$-parameters on
$X$.
Assume $X[1/p] \ne \emptyset$.
Let $F:L \to \GL(V)$ be an algebraic representation
where $V$ is a vector space scheme over $\cO_E$.

Assume $H^2(G_K, F(r^{\univ}))$ is SGR over $X$
and is supported on $X\otimes_{\bZ_p}\bF_p$.
Given any $[\bar {c}]\in H^1(G_K, F(\bar r))$,
there exists a
$\bZp$-point of $X$ giving rise
to a Galois representation
$r^{\circ}: G_K \to L(\bZp)$,
such that
the 1-cocycle $[\bar {{c}}]$
admits a lift
$[{c}] \in H^1(G_K, F(r^{\circ}))$.

\vspace{3mm}

\paragraph{Remark}
Since $H^2(G_K, -)$ (abelian coefficients) is the highest degree cohomology ($H^i(G_K, -)=0$ for $i>2$),
$H^2(G_K, -)$ commutes with base change. Thus we may view $H^2(G_K, F(r^{\univ}))$ as a coherent sheaf over $X$.

The proof is almost identical to
that of \cite[Theorem 6.3.2]{EG23}.

We would like to explain the main ideas
behind the proof, and why we need
the sufficiently generically regular
condition.

We have a complex of finitely generated
projective modules over $R$ concentrated on degree
[0, 2]
$$
C^0\to C^1\xrightarrow{d} C^2
$$
which computes the Galois cohomology
$H^{\bullet}(G_K, F(r^{\univ}))$.
Let $Z^1 := \ker (d)$ and $B^2:= \Img(d)$.
A mod $\varpi$ cocycle $[\bar c]$
is represented by an element $\bar c$ in the
kernel of $C^1/\varpi \to C^2/\varpi$.
We fix an arbitrary lift
$\widetilde{c}\in C^1$ of $\bar c$.
We can do a formal blowup
$\spec \widetilde{R}\to\spec R$,
so that the pull-back of 
$B^2$ on $\spec \widetilde{R}$
a locally free sheaf.
To make the exposition short,
we simply assume $B^2$ is locally free
over $\spec R$,
but we should not think of $\spec R$
as a local ring anymore,
because after formal blow-up, there are more closed points in the special fiber.
Now we have a sequence of locally free sheaves of modules
$$
C^1\to B^2 \to C^2.
$$
The key here is we want to regard this
as a sequence of vector bundles
instead of sheaves of modules.
Write $\mathscr{V}(\mathcal{F})$
for $\underline{\spec}(\Sym\mathcal{F}^{\vee})$, the vector bundle associated
    to the coherent sheaf $\mathcal{F}$.
So we have a sequence of scheme morphisms
$$
\begin{tikzcd}
\mathscr{V}(C^1)\arrow{r}{f}\arrow[bend left=20]{rr}{d}
&
\mathscr{V}(B^2)\arrow{r}
&
\mathscr{V}(C^2)
\\&\spec R \arrow{lu}{s}\arrow{u}{f\circ s}\arrow{ur}{d\circ s}&
\end{tikzcd}
$$
The element $\widetilde{c}$ of $C^1$ defines
a section
$s:\spec R\to \mathscr{V}(C^1)$
such that 
the section $d\circ s:\spec R\to \mathscr{V}(C^2)$
intersects with the identity section
$e_{\mathscr{V}(C^2)}:\spec R\to \mathscr{V}(C^2)$.

It turns out $\bar c\in \ker(C^1/\varpi\to C^2/\varpi)$ admits a lift
in $Z^1$, as long as the section
$f\circ s$ intersects with
the identity section $e_{\mathscr{V}(B^2)}$ of 
$\mathscr{V}(B^2)$.
The intersection $(d\circ s)\cap e_{\mathscr{V}(C^2)}$ should
occur above a codimension $1$ locus
of $\spec R$.
If the support of $H^2=C^2/B^2$ is
small (that is, has big codimension),
then the intersection should 
happen at some point $x\in\spec R$
outside of the support of $H^2$,
and we are done.

We include a formal proof here, as suggested by a referee.

\begin{proof}
We follow the notations of \cite[Theorem 6.3.2]{EG23} closely.
The Herr complex $C^\bullet$ (supported in degrees $[0,2]$) computes $H^\bullet(G_K, F(r^{\univ}))$.
Since $B^2$ equals to $C^2$ over the generic fiber $U=X[1/p]$,
by \cite[Tag 0815]{stacks-project},
there exists a $U$-admissible blowup $\pi:\wt{X}\to X$ such that $\pi^*B^2$ is locally free.
Let $\wt{C}^\bullet$ be the pullback complex $\pi^*(C^\bullet)$.
The corresponding $2$-coboundaries $\wt B^2=\pi^*B^2$ (since it is the highest degree coboundary).
Thus the $1$-cocycles $\wt Z^1$ is locally free and $[\wt C^0\to \wt Z^1]$ is a good complex.

Lifting the class $[\bar c]$ to an element of $\kappa\otimes C^1$ (where $\kappa$ is the residue field of $R$)
and then to an element $c$ of $C^1$.
$c$ can be thought of as a homomorphism $R\to C^1$ whose image under the coboundary lies in $m_RC^2$.
The composite $b:R\xrightarrow{c}C^1\to B^2$ pulls back to a section
$\wt{b}:\cO_{\wt X}\to \wt B^2$.
By \cite[Lemma 6.2.7]{EG23} and the SGR property, $\wt b$ has non-empty zero locus,
which contain a point $\wt x$ lying over the closed point $x\in X$.
The section $c$ pulls back to a section $\wt c:\cO_{\wt X}\to \wt C^1$,
whose valued at the point $\wt x$ lies in the fiber of $\wt Z^1$.
In other words, the fiber of $\wt c$ at $\wt x$ defines a 1-cocycle
in the complex $\kappa(\wt x)\otimes [\wt C^0\to \wt Z^1]$,
giving rise to a class $\bar e\in H^1(\kappa(\wt x)\otimes [\wt C^0\to \wt Z^1])$
lifting the original class $[\bar c]$.

Since $\wt X$ is $\bZ_p$-flat,
there exists a morphism $\wt f: \spec \bZp\to \wt X$ lifting $\wt x$.
The composite $f:\spec \bZp\xrightarrow{\wt f}\wt X\to X$
lifts the closed point $x\in X$,
and determines an $L$-parameter $r^\circ:G_K\to L(\bZp)$.
Since $H^2(\wt C^\bullet)$ is the kernel of the homomorphism of locally free sheaves $\wt B^2\hookrightarrow \wt C^2$
and is torsion,
by \cite[Lemma 6.2.1]{EG23} there is an effective Cartier divisor $D$
contained in the special fiber of $\wt X$ with the property that
for any morphism to $\wt X$ that meets $D$ properly, the higher derived pullbacks
of $H^2(\wt C^\bullet)$ under this morphism vanish.
Since $\wt f$ meets the special fiber of $\wt X$ properly and thus meets $D$ properly,
$\mathbf{L}_i\wt f^*H^2(\wt C^\bullet)=0$ for $i>0$.
Thus
$$
H^1(G_K, F(r^\circ))=H^1(\wt f^*\wt C^\bullet)=\wt f^*H^1(\wt C^\bullet)
=H^1(\wt f^*[\wt C^0\to \wt Z^1]).
$$
(See the last two paragraphs of the proof \cite[Theorem 6.3.2]{EG23} for explanations).
Choose a class $e\in H^1(\wt f^*[\wt C^0\to \wt Z^1])$ lifting $\bar e$,
which corresponds to a $1$-cocycle $c$ lifting $\bar e$ by the identifications above.
\end{proof}

\subsection{A non-abelian lifting theorem}
~

\subsubsection{Theorem}
\label{thm:heisenberg-lift}
Let $U$ be a unipotent linear algebraic group of class $2$
whose center is isomorphic to $\Ga$.
Write $Z(U)$ for the center of $U$ and $U^{\ad}$
for $U/Z(U)$.
Fix an algebraic group homomorphism
$\phi:L \to \Aut(U)$ with graded pieces
$\phi^{\ad}: L\to \GL(U^{\ad})$
and $\phi^z: L \to \GL(Z(U))$.

Fix a mod $\varpi$ representation $\bar r:G_K \to L(\bF)$.
Let $[\bar c]\in H^1(G_K, U(\bF))$
be a characteristic $p$ cocycle.

Let $\spec R$ be an irreducible component of a crystalline lifting ring of $\bar r$.

Assume
\begin{itemize}
\item[ {[1]} ]
$H^2(G_K, \phi^{\ad}(r^{\univ}))$ is SGR and is supported on the special fiber of $\spec R$;
\item[ {[2]} ] $p\ne 2$;
\item[ {[3]} ]
There exists a 
finite Galois extension $K'/K$ of prime-to-$p$ degree
such that
$\phi(\bar r)|_{G_{K'}}$
is Lyndon-Demu\v{s}kin; and
\item[ {[4]} ]
There exists a $\bZp$-point
of $\spec R$ which is mildly regular
when restricted to $G_{K'}$.
(In particular, $\spec R[1/p]\ne 0$.)

\end{itemize}
Then there exists a $\bZp$-point of
$\spec R$ which gives rise to
a Galois representation
$r^{\circ}:G_K\to L(\bZp)$ such that
if we endow $U(\bZp)$ with the
$G_K$-action
$G_K\xrightarrow{r^{\circ}} L(\bZp)\xrightarrow{\phi} \Aut(U)(\bZp)$,
the cocycle $[\bar c]$ has a characteristic $0$
lift $[c]\in H^1(G_K, U(\bZp))$.

\begin{proof}
Take $F=\phi^{\ad}$ in
Theorem \ref{thm:abelian-lift}.
The theorem follows from Corollary \ref{cor:obstruction-theory}.
\end{proof}

We explain how the above theorem will be used.
Let $G$ be a connected reductive group over
$\cO_E$.
Let $\bar\rho: G_K\to G(\bF)$
be a mod $\varpi$ representation.
Assume $\bar\rho$ factors through
a parabolic $P\subset G$, with
Levi decomposition $P = L\ltimes U$.
Denote by
$\phi:L\to \Aut(U)$ the conjugation action.
We assume $U$ is unipotent of
class $2$,
so $U^{\ad}$ is an abelian group.
Write 
$\bar r$ for the Levi factor of $\bar\rho$.
$$
\xymatrix{
    & P(\bFp) \ar[d] \\
     G_K \ar[ur]^{\bar\rho}
    \ar[r]^{\bar r} & L(\bFp)
}
$$
Then $\bar\rho$ defines a cohomology class
$[\bar c]\in H^1(G_K, \phi(\bar r))$,
and the theorem above can be used to lift
$[\bar c]$.

\vspace{3mm}

\subsection{An unobstructed lifting theorem}
~

The following result will be used
in the proof of the main theorem.

\subsubsection{Proposition}
\label{prop:unob-lift}
Let $V$ be a unipotent linear algebraic group such that
$V(\bZp)$ is equipped with a continuous $G_K$-action.
Let $[\bar c]\in H^1(G_K, V(\bFp))$
be a characteristic $p$ cocycle.
Let $Z(V)$ be the center of $V$,
and write $V^{\ad}$ for $V/Z(V)$.
The quotient $V\to V^{\ad}$ induces
a map $\ad:H^1(G_K, V)\to H^1(G_K, V^{\ad})$.
Assume 
$H^2(G_K, Z(V)(\bFp))=0$.

If $\ad([\bar c])$ admits a lift
in $H^1(G_K, V^{\ad}(\bZp))$,
then $[\bar c]$ admits a lift in
$H^1(G_K, V(\bZp))$.

\begin{proof}
By \cite[Proposition 43]{Se02},
since $Z(V)$ is a central normal subgroup
of $V$, there exists a long exact sequence
of pointed sets
$$
\xymatrix{
H^1(G_K, V(\bZp))
\ar[d]\ar[r]^{\ad} &
H^1(G_K, V^{\ad}(\bZp))
\ar[r]^{\delta}\ar[d] &
H^2(G_K, Z(V)(\bZp)) \ar[d] \\
H^1(G_K, V(\bFp)
\ar[r]^{\ad} &
H^1(G_K, V^{\ad}(\bFp))\ar[r] &
H^2(G_K, Z(V)(\bFp))
}
$$
By Nakayama's Lemma, we have
$H^2(G_K, Z(V)(\bZp))=0$.
In particular, there exists
$[c']\in H^1(G_K, V(\bZp)$
such that $\ad([\bar c]) = \ad([c'])$
mod $\varpi$.
Write $[\bar {c}']$ for $[c']$ mod $\varpi$.
Say $[\bar c] = [\bar {c}'] + [\bar f]$
for some $[\bar f]\in H^1(G_K, Z(V)(\bFp))$
    (recall that $H^1(G_K, V)$ is a $H^1(G_K, Z(V))$-torsor).
Since $H^1(G_K, Z(V)(\bZp))=0$,
there exists a lift $[f]$ of $\bar f$.
The cocycle $[c]:= [c'] + [f]$
is a lift of $[\bar c]$. 
\end{proof}

\vspace{3mm}
\section{Codimension estimates of loci cut out by $H^2$}
\label{sec:codim}

Assume $p>3$.
Let $K/\Qp$ be a finite extension.
Let $E/\Qp$ be a finite extension with
ring of integers $\cO_E$, uniformizer $\varpi$,
and residue field $\bF$.

\vspace{3mm}

\subsection{The Emerton-Gee stack}~
\vspace{3mm}

We follow the notation of \cite{EG23}.
For each $d>0$, \cite{EG23}
constructed the moduli stack
$\cX_d = \mathcal{X}_{K,d}$
of projective \'etale $(\phi, \Gamma_K)$-modules of rank $d$,
which is a finite-type algebraic stack over $\bF$.

We prove a mild generalization of
\cite[Proposition 5.4.4(1)]{EG23}.

Let $T$ be a reduced finite type
$\bFp$-scheme.
Let $f:T\to (\cX_{a, \red})_{\bFp}\times
(\cX_{d, \red})_{\bFp}$
be a morphism.
There is a morphism
$$
\eta:(\cX_{a, \red})_{\bFp}\times
(\cX_{d, \red})_{\bFp}
\to (\cX_{ad, \red})_{\bFp}
$$
sending a pair of $(\phi, \Gamma)$-modules
$M, N$
to their hom module $\Hom_{\phi, \Gamma}(M,N)$,
by the moduli interpretation.
The morphism $\eta(f)$
corresponds to a family $\bar\rho_T$
of rank-$ad$ Galois representations over
$T$.
We assume $H^2(G_K, \bar\rho_{\eta(t)})$
is of constant rank for all $t\in T(\bFp)$.
By \cite[Lemma 5.4.1]{EG23},
the coherent sheaf
$H^2(G_K, \bar\rho_T)$
is locally free of rank $r$ as
an $\cO_E$-module.

By \cite[Theorem 5.1.22]{EG23},
we can choose a complex of finite rank 
locally free $\cO_E$-modules
$$
C_T^0\to C_T^1\to C_T^2
$$
computing $H^{\bullet}(G_K, \bar\rho_T)$.
Since $H^2(G_K, \bar\rho_T)$
is a locally free sheaf,
the truncated complex
$$
C_T^0\to Z^1_T
$$
is again a complex of locally free $\cO_T$-modules.
The vector bundle 
$\mathscr{V}(Z^1_T):=
\underline{Spec}(\Sym(Z^1_T)^{\vee})$
associated to the locally free sheaf
$Z^1_{T}$ parameterizes all extensions
$$
0\to \bar\rho_{\eta(t)} \to ? \to \bFp \to 0,
\hspace{3mm}t\in T(\bFp)
$$
of the trivial $G_K$-representation $\bFp$
by $\bar\rho_{\eta(t)}$.
There are two projection morphisms
$$()_1:
(\cX_{a, \red})_{\bFp}\times
(\cX_{d, \red})_{\bFp}
\to (\cX_{a,\red})_{\bFp}$$
and
$$()_2:
(\cX_{a, \red})_{\bFp}\times
(\cX_{d, \red})_{\bFp}
\to (\cX_{d,\red})_{\bFp}$$
For each $t\in T(\bFp)$,
$f(t)_1 \in (\cX_{a, \red})({\bFp})$
corresponds to a rank-$a$ Galois representation $\bar\rho_{t_1}$,
and
$f(t)_2 \in (\cX_{d, \red})({\bFp})$
corresponds to a rank-$d$ Galois representation $\bar\rho_{t_2}$.
We have 
$\bar\rho_{\eta(t)}=\Hom_{G_K}(\bar\rho_{t_1}, \bar\rho_{t_2})$.
So we can also regard $\mathscr{V}(Z^1_T)$
is a scheme parametrizing all extensions
$$
0\to \bar\rho_{t_1}\to ? \to \bar\rho_{t_2}\to 0,
\hspace{3mm}t\in T(\bFp)
$$
and we have a morphism
sending extension classes
to equivalence classes of $G_K$-representations
$$
g:\mathscr{V}(Z^1_T)\to (\cX_{a+d, \red})_{\bFp}.
$$

\paragraph{Lemma}
\label{lem:est}
Let $e$ denote the dimension of
the scheme-theoretic image of $T$
in 
$
(\cX_{a, \red})_{\bFp}\times
(\cX_{d, \red})_{\bFp}
$.
Then the scheme-theoretic image of
$V=\mathscr{V}(Z^1_T)$
in $(\cX_{a+d, \red})_{\bFp}$
has dimension at most
$$
e+r+ad[K:\Qp].
$$

\begin{proof}
Without loss of generality,
we assume $T$ (and hence $V$) is irreducible.
The proof is a routine calculation
using stacks. We follow the proof
of \cite[Proposition 5.4.4]{EG23} closely.

Let $v\in V(\bFp)$.
Write $t$ for the composite
$\spec \bFp \xrightarrow{v} V\to T$.
Write $f(t)$ for the composite $f\circ t$
Write $g(v)$ for the composite $g\circ v$.
Define
$$T_{f(t)}:= T\underset{f,
(\cX_{a, \red})_{\bFp}\times
(\cX_{d, \red})_{\bFp}
,f(t)}{\times}\spec \bFp$$
$$V_{g(v)}:= V\underset{g,
(\cX_{a, \red})_{\bFp}\times
(\cX_{d, \red})_{\bFp}
,g(v)}{\times}\spec \bFp$$
$$V_{f(t),g(v)}:= V_{g(v)}\underset{
(\cX_{a, \red})_{\bFp}\times
(\cX_{d, \red})_{\bFp}
,f(t)}{\times}\spec \bFp.$$
Note that
$V_{f(t), g(v)}\cong T_{f(t)}\times_TV_{g(v)}$.

By \cite[Tag 0DS4]{stacks-project},
it suffices to show, for $v$ lying
in some dense open subset of $V$,
$$
\dim V_{f(t),g(v)}
\ge \dim V -(e+r+ad[K:\Qp]).
$$
Let $\bar\rho_{f(t)_1}$ denote
the Galois representation 
corresponding to 
$f(t)_1:\spec \bFp\to (\cX_{a,\red})_{\bFp}$.
Let $\bar\rho_{f(t)_2}$ denote
the Galois representation 
corresponding to 
$f(t)_2:\spec \bFp\to (\cX_{d,\red})_{\bFp}$.
Say $G_{t_1}:=\Aut(\bar\rho_{f(t)_1})$,
and $G_{t_2}:=\Aut(\bar\rho_{f(t)_2})$.
The morphism $f(t)$ factors through
a monomorphism
$$
[\spec \bFp/G_{t_1}]
\times [\spec \bFp/G_{t_2}]
\hookrightarrow
(\cX_{a,\red})_{\bFp}
\times(\cX_{d,\red})_{\bFp}
$$
which induces a monomorphism
$$
([\spec \bFp/G_{t_1}]
\times [\spec \bFp/G_{t_2}])
\underset{(\cX_{a,\red})_{\bFp}\times (\cX_{d,\red})_{\bFp}}{\times}V_{g(v)}
\hookrightarrow
V_{g(v)}.
$$
So it suffices to show
$$
\text{($\dagger$)}\hspace{5mm}\dim V_{f(t),g(v)} \ge \dim V
-(e+r+ad[K:\Qp]) + \dim G_{t_1}+\dim G_{t_2}
$$
for $v$ lying in a dense open of $V$.

There exists an \'etale cover
$S$ of $(T_{f(t)})_{\red}$
such that the pull-back family
$\bar\rho_S$ is a trivial family
with fiber $\bar\rho_t$.

Let $C^0_S\to Z^1_S$ denote the
pullback family of $C^0_T\to Z^1_T$
to $S$.
$C^0_S\to Z^1_S$ is also the
pullback family of the fiber
$C^0_{t}\to Z^1_{t}$ to $S$.
Write $W$ for the affine scheme associated to
$H^1(G_K, \bar\rho_{f(t)_1}^{\vee}\otimes\bar\rho_{f(t)_2})$.
By the isomorphism
$$
H^1(G_K, \bar\rho_{f(t)_1}^{\vee}\otimes\bar\rho_{f(t)_2})\cong \Ext_{G_K}(\bar\rho_{f(t)_1},\bar\rho_{f(t)_2})
$$
there is a morphism $W\to(\cX_{a+d,\red})_{\bFp}$.
Denote by $w$ the image of $v$ in $w$.
We have
$$
S\times_T V_{g(v)} = S\times_TV\times_WW_{h(w)}.
$$
Let $V'$ be the kernel of
$S\times_TV\to S\times_{\bFp}W$,
which is a trivial vector bundle over $S$.
We have
\begin{eqnarray}
\dim V_{f(t),g(v)}&=&\dim S\times_TV_{g(v)}\nonumber\\
&=&\rank V' + \dim S + \dim W_{h(w)}\nonumber\\
&=&\rank Z^1_T-\dim H^1(G_K,\bar\rho_{f(t)_1}^{\vee}\otimes\bar\rho_{f(t)_2})+\dim S+\dim W_{h(w)}\nonumber
\end{eqnarray}
Note that $\dim V-\dim T=\rank Z^1_T$,
and by local Euler characteristic
$
H^0(G_K, \bar\rho_{f(t)_1}^{\vee}\otimes\bar \rho_{f(t)_2})-
H^1(G_K, \bar\rho_{f(t)_1}^{\vee}\otimes\bar \rho_{f(t)_2})
+ r =- ad[K:\Qp]
$.
We can replace $T$ by a dense open of $T$
where $e=\dim T-\dim T_{f(t)}=\dim T-\dim S$.
Combine all these equalities, ($\dagger$)
becomes
$$
\dim W_{h(w)} \ge \dim H^0(G_K, \bar\rho_{f(t)_1}^{\vee}\otimes\bar\rho_{f(t)_2})+\dim G_{t_1} + \dim G_{t_2}
$$
which follows from the fact that
$$
H^0(G_K, \bar\rho_{f(t)_1}^{\vee}\otimes\bar\rho_{f(t)_2}) \rtimes (G_{t_1}\times G_{t_2}) \subset \Aut(\bar\rho_w)
$$
and $\dim W_{h(w)}\ge \dim \Aut(\bar\rho_w)$.
\end{proof}

We recall some terminology from
\cite{EG23}.
Denote by $\ur_x:\Gm\to\cX_1$
the family of unramified characters of
$G_K$.
Let $T$ be a reduced finite type
$\bF$-scheme.
Let $T\to\cX$ be a morphism,
corresponding to a family
$\bar \rho_T$ of $G_K$-representations
over $T$.
We can construct the family of
unramified twisting
$
\bar \rho_T \boxtimes \ur_x
$
over $T\times \Gm$.
$\bar\rho_T$ is said to be \textit{twistable}
if whenever $\bar\rho_t\cong \bar\rho_{t'}\otimes \ur_a$ for
    $t,t'\in T(\bFp)$ and
    $a\in \bFp^{\times}$, we have $a=1$.
 $\bar\rho_T$ is said to be \textit{essentially twistable} if for each $t\in T(\bFp)$, the set of $a\ne 1$ for which $\bar\rho_t\cong \bar\rho_{t'}\otimes \ur_a$
 is finite.

 We say $\bar\rho_T$ is \textit{untwistable} if $\bar\rho$ is not essentially twistable.

From now on, 
write $\cX = (\cX_{2, \red})_{\bFp}$
for the moduli stack
parameterizing $(\phi, \Gamma)$-modules
of rank $2$.

Let $\bar r^{\univ}$ be the universal family
of $(\phi, \Gamma)$-modules
over $\cX$.

\subsubsection{Remarks on the word use ``locus''}
Let (P) be a property
that can be written as
$$
(P) = (P1) - (P2)
$$
where both (P1) and (P2) are closed conditions.

If $\cX$ be a moduli stack of finite type over $\bFp$,
the the {\it locus of objects satisfying property (P$i$)}
is by definition the scheme-theoretic of a finite type morphism
$Y\to \cX$ such that
all objects of $\cX(\bFp)$ satisfying property (P$i$)
are in the image of $Y(\bFp)$,
$i=1,2$.

The {\it locus of objects satisfying property (P)}
is by definition the 
locus of objects satisfying (P1) $-$
locus of objects satisfying (P2).

\subsection{Loci cut out by $H^2(G_K, \sym^3/\det^2)$}~

\vspace{3mm}

Write $H^2$ for
$
H^2(G_K, \frac{\sym^3(\bar {r}^{\univ})}{\det(\bar{r}^{\univ})^2})
$.
Let $x\in \cX(\bFp)$
with corresponding Galois representation
$\bar r_x:G_K\to \GL_2(\bFp)$.

We are interested in $H^2(G_K, \sym^3/\det^2)$ because it is a composition factor of
the unipotent radical of the short root parabolic of the exceptional group $G_2$,
regarded as a representation of the corresponding Levi factor.

\vspace{3mm}

\paragraph{Lemma}
\label{lem:h2-irr}
If
$\bar r_x$ is irreducible, then
$$
h^2_x:=\dim_{\bFp} H^2(G_K, \frac{\sym^3(\bar {r}_x)}{\det(\bar{r}_x)^2})\le 2.
$$

\begin{proof}
An irreducible mod $\varpi$ representation
is of the shape
$\Ind_{G_{K_2}}^{G_K}\bar\chi$
for some character $\bar\chi$ of
    the degree-$2$ unramified extension
    $K_2$ of $K$.
    A direct computation shows
    $$
    \sym^3(\bar{r}_x)=
    \Ind(\bar{\chi}^3) \oplus \Ind(\bar\chi\det\bar {r}_x).
    $$
Both
$H^2(G_K, \frac{\Ind(\bar\chi^3)}{\det(\bar{r}_x)^2})$
and
$H^2(G_K, \frac{\Ind(\bar\chi\det \bar{r}_x)}{\det(\bar{r}_x)^2})$
has dimension at most $1$.
This is because the induction
of a character can't  be
a direct sum of two isomorphic characters
(when $p\ne 2$),
by Shapiro's lemma and local Tate duality.
\end{proof}

\paragraph{Corollary}
$H^2$ is SGR when restricted to the
irreducible locus.

\begin{proof}
Up to unramified twist,
there are only finitely many irreducible
representations.
By Lemma \ref{lem:h2-irr}, we have
$h^2_x\le 2$ when $\bar r_x$ is irreducible.

We first consider the sublocus where
$h^2_x = 2$.
This sublocus consists of finitely many
irreducible $G_K$-representations.
Thus the sublocus in question is the scheme-theoretic union of
the scheme-theoretic images of finitely many morphisms
$\spec \bFp\to \cX$ corresponding to the finitely many irreducibles.
The automorphism group of such an irreducible representation is $\Gm$
and the morphisms $\spec \bFp\to \cX$ factor through $[\spec \bFp/\Gm]\to \cX$.
The sublocus has dimension at most $-1$.

Then we consider the locus where
$h^2_x \le 1$.
This sublocus consists of 
the unramified twists of finitely many
irreducible $G_K$-representations.
Thus the sublocus in question is the scheme-theoretic union of
the scheme-theoretic images of finitely many morphisms
$\Gm\to [\Gm/\Gm] \to \cX$ corresponding to the finitely many irreducibles,
and has dimension at most $\dim~[\Gm/\Gm]=0$.

In either case, $\dim$ of locus $\le [K:\Qp]-h^2_x$.
\end{proof}

\paragraph{Lemma}
\label{lem:h2-ext}
If $\bar {r}_x$ is  a non-trivial
extension of two characters, then
$$
h^2_x:=\dim H^2(G_K, \frac{\sym^3(\bar {r}_x)}{\det(\bar{r}_x)^2})\le1
$$
and when the equality holds,
the quotient character of $\bar {r}_x$
is a character whose third power
is $\bFp(1)$.

\begin{proof}
This is where we make use of
the assumption $p > 3$.
Say $\bar{r}_x \sim
\begin{bmatrix}
\bar{\chi}_1 & \bar c \\
& \bar{\chi}_2
\end{bmatrix}
$.
We claim
$$
\sym^3(\bar{r}_x) \sim
\begin{bmatrix}
\bar{\chi}_1^3 & \bar{\chi}_1^2 \bar c & * & *\\
& \bar{\chi}_1^2\bar{\chi}_2&2\bar{\chi}_1\bar{\chi}_2\bar c & * \\
& & \bar{\chi}_1\bar\chi_2^2 & 3\bar{\chi}_2^2\bar c\\
& & & \bar{\chi}_2^3
\end{bmatrix}
$$
which has a unique $G_K$-invariant quotient line.
Let $\{e_1, e_2\}$ be a basis
of the representation space of $\bar {r}_x$ such that $e_1$ is an invariant line.
Then $\{e_1^3, e_1^2e_2,e_1e_2^2,e_2^3\}$
is a basis of the representation space
of $\sym^3(\bar{r}_x)$.
By duality, we only need to show
$\sym^3(\bar {r}_x)$ has a unique
invariant line.
Clearly $\{e_1^3\}$ defines an invariant line.
Assume there is another invariant line
$\Span(v)$.
We quotient $\sym^{3}(\bar {r}_x)$
by $\Span (e_1^3)$.
The quotient representation has a unique
invariant line generated by the
image of $e_1^2e_2$ (we postpone the explanation to the next paragraph).
So $v\in \Span(e_1^3, e_1^2e_2)$.
But then we must have $v\in \Span(e_1^3)$,
since $[\bar c]$ is a non-trivial
extension class.

The quotient representation $\sym^{3}(\bar {r}_x)/\Span (e_1^3)$ has a $G_K$-invariant line spanned by the image of $e_1^2e_2$.
Say $\Span (u)$ is another invariant line of $\sym^{3}(\bar {r}_x)/\Span (e_1^3)$.
We have $u\in \Span(e_1^2e_2, e_1e_2^2)\cong \bar\chi_1\bar\chi_2\otimes \bar r_x$.
Thus $u\in \Span(e_1^2e_2)$ since $\bar c$ is a non-trivial extension.
\end{proof}

\vspace{3mm}

\paragraph{Corollary}
$H^2$ is SGR when restricted to the
locus where
$\bar {r}_x$ is a non-trivial extension
of two characters.

\begin{proof}
Say $\bar r_x$ is the extension of
$\bar \beta$ by $\bar \alpha$.
By Lemma \ref{lem:h2-ext}, we have
$h^2_x\le 1$ when $\bar r_x$ is a nontrivial extension of characters.
So the locus where $\bar{r}_x$
is a non-trivial extension of characters
consists of four sub-loci:
\begin{itemize}
\item[(i)]
$h^2_x = 1$ and $\Ext^2(\beta, \alpha)=0$;
\item[(ii)]
$h^2_x = 1$ and $\Ext^2(\beta, \alpha)\ne0$;
\item[(iii)]
$h^2_x = 0$ and $\Ext^2(\beta, \alpha)=0$; and
\item[(iv)]
$h^2_x = 0$ and $\Ext^2(\beta, \alpha)\ne 0$;
\end{itemize}

Let $T\subset (\cX_{1,\red})_{\bFp}
\times (\cX_{1,\red})_{\bFp}$
be the locus of the pair $(\alpha, \beta)$,
$\alpha, \beta\in \cX_{1,\red}(\bFp)$;
say $\dim T = e$, and
$\dim \Ext^2(\beta, \alpha) = r$.
By Lemma \ref{lem:est}, each sub-locus
has dimension at most
$$
e+r+[K:\Qp].
$$
In sub-locus (i),
$\beta$ has only finitely many choices once $\alpha$ is chosen,
so $e=-1$, $r=0$;
in sub-locus (ii),
both $\beta$ and $\alpha$ have only
finitely many choices, so
$e=-2$, $r=1$;
in sub-locus (iii),
both $\beta$ and $\alpha$ can vary in a dense open of $(\cX_{1,\red})_{\bFp}$,
so $e=2\dim (\cX_{1,\red})_{\bFp}=0$,
$r=0$;
in sub-locus (iv),
when $\alpha$ is chosen, $\beta$ has only finitely many choices, so
$e=-1$, $r=1$.
We can verify that in each case
$e+r+[K:\Qp] \le \dim \cX -h^2_x=
[K:\Qp]-h^2_x$.
\end{proof}

\paragraph{Lemma}
\label{lem:h2-dist}
If $\bar{r}_x$ is a direct sum of distinct characters,
then
$$
H^2(G_K, \frac{\sym^3(\bar {r}_x)}{\det(\bar{r}_x)^2})\le 2.
$$

\begin{proof}
Say $\bar{r}_x \sim
\begin{bmatrix}
\bar{\chi}_1 &  \\
& \bar{\chi}_2
\end{bmatrix}$.
We have
$$
 \frac{\sym^3(\bar {r}_x)}{\det(\bar{r})^2}
 \cong
 \bar{\chi}_1\bar{\chi}_2^{-2}\oplus
 \bar{\chi}_2^{-1}\oplus \bar{\chi}_1^{-1}
 \oplus \bar{\chi}_2\bar{\chi}_1^{-2}.
$$
If $\bar\chi_1\ne\bar\chi_2$,
then the multiset
$\{
 \bar{\chi}_1\bar{\chi}_2^{-2},
 \bar{\chi}_2^{-1}, \bar{\chi}_1^{-1},
 \bar{\chi}_1^{-2}\bar{\chi}_2\}
$
contains at most
$2$ isomorphic characters.
\end{proof}

\paragraph{Corollary}
$H^2$ is SGR when restricted to the locus
where $\bar {r}_x$ is a direct sum of
distinct characters.

\begin{proof}
By Lemma \ref{lem:h2-dist},
we have $h^2_x \le 2$ when 
$\bar x = \alpha\oplus \beta$
is a direct sum of distinct characters.

In the sublocus where $h^2_x = 2$,
we must have $\pm \alpha=\pm \beta=\bF(-1)$.
The sublocus is the scheme-theoretic union of the scheme-theoretic image
of finitely many $\spec \bFp \times\spec\bFp\to \cX$
and has dimension $0-2=-2$.

In the locus where $h^2_x =1$, we have 
one of the following:
(i) $\alpha=\bF(-1)$, (ii)
$\beta=\bF(-1)$, (iii)
$\alpha=\beta^2(-1)$, (iv)
$\beta=\alpha^2(-1)$.
In each of these cases,
the locus has dimension
$\dim \Gm - \dim \text{Aut}(\bar {r}_x)
= 1-2=-1$.

In the locus where $h^2_x=0$,
both $\alpha$ and $\beta$ lives 
in an untwistable family, and the locus
has dimension
$2\dim \Gm - \dim \text{Aut}(\bar {r}_x)
=2-2=0$.
\end{proof}

\paragraph{Lemma}
\label{lem:h2-scalar}
If $\bar {r}_x$ is a direct sum of
isomorphic characters, then
$$
H^2(G_K, \frac{\sym^3(\bar {r}_x)}{\det(\bar{r}_x)^2})\le 4.
$$

\begin{proof}
This is trivial because the underlying
$\bFp$-vector space is $4$-dimensional.
\end{proof}

\paragraph{Corollary}
$H^2$ is SGR when restricted to the locus
where $\bar {r}_x$ is a direct sum of
isomorphic characters.

\begin{proof}
The automorphism group is $4$-dimensional.
So the locus in the moduli stack  has dimension
$\dim \Gm - \dim \text{Aut}(\bar {r}_x)
=1-4=-3$.
\end{proof}

\vspace{3mm}

\paragraph{Theorem}
\label{thm:egstack}
The locus of $\bar r_x$ in $\cX$ for which
$$
H^2(G_K, \frac{\sym^3(\bar {r}_x)}{\det(\bar{r}_x)^2})\ge r
$$
is of dimension
at most
$[K:\Qp]-r$.

\begin{proof}
This theorem follows immediately from
Lemma \ref{lem:h2-irr},
Lemma \ref{lem:h2-ext},
Lemma \ref{lem:h2-dist},
Lemma \ref{lem:h2-scalar},
and their corollaries.
\end{proof}

Fix a mod $\varpi$ representation
$
\bar r: G_K\to GL_2(\bF)
$.
Let $\underline{\lambda}$ be a Hodge type.
Let $R$ be an irreducible component of the crystalline lifting ring
${R_{\bar r}^{\crys, \underline{\lambda}, \cO_E}}$.
Assume $\spec R[1/p]\ne \emptyset$.
Let $r^{\univ}$ be the universal family
of Galois representations on $R$.

Since $H^2(G_K, \frac{\sym^3({r}^{\univ})}{\det({r}^{\univ})^2})$ is a coherent sheaf,
by the semicontinuity theorem,
the locus
$ X_s:=\{x\in\spec R|\dim \kappa(x)\otimes_R H^2 \ge s\}$
is locally closed, and 
has a reduced induced scheme structure.

\vspace{3mm}
\paragraph{Theorem}
\label{thm:codim-ad}
Let $R$ be an irreducible component of the crystalline lifting ring
of regular labeled Hodge-Tate weights.
If $H^2(G_K, \frac{\sym^3({r}^{\univ})}{\det({r}^{\univ})^2})$ is $\varpi$-torsion,
the locus $$\{x\in\spec R|\dim \kappa(x)\otimes_R H^2(G_K, \frac{\sym^3({r}^{\univ})}{\det({r}^{\univ})^2}) \ge s\}$$ has codimension
$\ge s+1$ in $\spec R$ for $s\ge 1$.

\begin{proof}
The proof is identical to that
of \cite[Theorem 6.1.1]{EG23}
if we use 
Theorem \ref{thm:egstack} instead of
\cite[Theorem 5.5.12]{EG23}.
\end{proof}

\vspace{3mm}

\vspace{3mm}

\section{The existence of crystalline lifts for 
the exceptional group $G_2$}
\label{sec:main}

\subsection{Parabolics of $G_2$}~
\label{ss:G2}

Let $G_2$ be the 
Chevalley group over $\cO_E$
of type $G_2$.

Let $E/\Qp$ be a finite extension
with ring of integers $\cO_E$,
residue field $\bF$ and uniformizer
$\varpi$.

We remind the reader of the root
system of $G_2$:
\begin{center}
\begin{tikzpicture}
    \foreach\ang in {60,120,...,360}{
     \draw[->,blue!80!black,thick] (0,0) -- (\ang:2cm);
    }
    \foreach\ang in {30,90,...,330}{
     \draw[->,blue!80!black,thick] (0,0) -- (\ang:3cm);
    }
    \draw[magenta,->](1,0) arc(0:150:1cm)node[pos=0.1,right,scale=0.5]{$5\pi/6$};
    \node[anchor=south west,scale=0.6] at (2,0) {$\alpha$};
    \node[anchor=south west,scale=0.6] at (-2.9,1.6) {$\beta$};
    \node[anchor=south west,scale=0.6] at (-.9,1.6) {$\beta+\alpha$};
    \node[anchor=south west,scale=0.6] at (.9,1.6) {$\beta+2\alpha$};
    \node[anchor=south west,scale=0.6] at (2.5,1.6) {$\beta+3\alpha$};
    \node[anchor=south west,scale=0.6] at (0.2,2.8) {$2\beta+3\alpha$};
    \node[anchor=north,scale=0.6] at (0.7,-3.2) {\huge Root system of $G_{2}$};
  \end{tikzpicture}
\end{center}

\subsubsection{The short root parabolic}

Let $P\subset G_2$ be the short root parabolic,
which admits a Levi decomposition
$P=L\ltimes U$.
The Levi factor $L$ is a copy of $\GL_2$
and the unipotent radical $U$ is a unipotent group of class $2$.
Write $U^{\ad}$ for $U/Z(U)$.

Fix an isomorphism $\std:L\cong \GL_2$.
We have
\begin{itemize}
\item $Z(U)\cong \mathbb{G}_a$, and
\item $U^{\ad}\cong \mathbb{G}_a^{\oplus 4}$.
\end{itemize}
Write $\Lie U = Z(U)\oplus U^{\ad}$.
The Levi factor acts on $U$ by conjugation.
We have an isomorphism of $L$-modules
$$
(*) \hspace{5mm} \Lie U \cong \frac{1}{{\det}^2}\sym^3(\std) \oplus
\frac{1}{\det}
$$
where $\det:L\to \Gm$ is the determinant
character, and $\std:L\xrightarrow{\cong} \GL_2$
is the fixed isomorphism.
The above short exact sequence 
can be upgraded to a short exact sequence of
groups with $L$-actions
$$
0\to \frac{1}{\det}\to U
\to \frac{1}{{\det}^2}\sym^3(\std) \to 0.
$$

For lack of reference, we explain how to get $(*)$.
By inspecting the root system for $G_2$,
we find that the roots whose root group is contained in $U^{\ad}$
lie in a single line.
Therefore $U^{\ad}$ is an irreducible $L$-module,
and is thus isomorphic to $\sym^3(\std)$
up to an algebraic character;
then computation shows the character is $1/\det^2$
(also see the SageMath code on my homepage).

\subsubsection{The long root parabolic}

Let $Q\subset G_2$ be the long root parabolic,
which admits a Levi decomposition
$Q=L'\ltimes V$ where
$L'\cong \GL_2$ and $V$ is a unipotent group of class $3$.
Fix an isomorphism $\std:L'\xrightarrow{\cong}\GL_2$.
Write $\det$ for the composition
$L'\xrightarrow{\std} \GL_2 \xrightarrow{\det}\GL_1$.

Write $U'$ for $V/Z(V)$.
Then $U'$ is a unipotent group of class $2$ whose
center is isomorphic to $\Ga$.
The conjugation action of $L'$ on $U'$ is given by
$U'/Z(U') \cong \std$,
and $Z(U') \cong \det$,
as $L'$-modules.

\vspace{3mm}

\subsection{Theorem}
\label{thm:existence-crys-lift-G2}
Assume $p > 3$.
Let $K/\Qp$ be a $p$-adic field.
Let $\bar\rho:G_K\to G_2(\bFp)$
be a mod $\varpi$ Galois representation.
Then $\bar\rho$ admits
a crystalline lift
$
\rho^{\circ}:G_K\to G_2(\bZp)
$
of $\bar\rho$.

Moreover, if $\bar\rho$ factors through 
a maximal parabolic and the Levi factor $\bar r:=\bar r_{\bar\rho}$ of $\bar\rho$
admits a Hodge-Tate regular and crystalline lift $r_1$ such that
the adjoint representation $\phi^{\Lie}(r_1)$ has Hodge-Tate weights
slightly less than $\underline 0$,
then $\rho^{\circ}$ can be chosen such that it factors through
the same maximal parabolic and its Levi factor $r_{\rho^{\circ}}$
lies on the same irreducible component of the spectrum of the crystalline
lifting ring that $r_1$ does.

\begin{proof}

If $\bar\rho$ is irreducible, then
$\bar\rho$ admits a crystalline lift
by \cite{L22}.

The exceptional group $G_2$ has two maximal
parabolic subgroups: the short root parabolic,
and the long root parabolic.

If $\bar\rho$ is reducible, then it factors through either parabolic subgroups.

\subsubsection{The short root parabolic case}
\label{case:short-root}
~
\vspace{3mm}

Let $P\subset G_2$ be the short root parabolic.
Recall that $P$ has a Levi decomposition
$P = L \ltimes U$.
Fix an isomorphism
$L\cong \GL_2$.

By Lemma \ref{lem:K-primed},
there exists a finite Galois extension
$K'/K$, of prime-to-p degree
such that $\bar r|_{K'}$ is Lyndon-Demu\v{s}kin.

Write $Z(U)$ for center of $U$,
and write $U^{\ad}$ for $U/Z(U)$.
Write $\phi:L\to\Aut(U)$ for the conjugation action,
with graded pieces
$\phi^{\ad}:L\to \GL(U^{\ad})$
and $\phi^{z}:L\to\GL(Z(U))$.
Write $\phi^{\Lie}$ for $\phi^{\ad}\oplus \phi^{z}$.
\vspace{3mm}

\paragraph{Lemma}
\label{lem:lift-r0}
Assume $p>2$.
There exists a Hodge-Tate regular crystalline lifting
$r^{\circ}: G_K\to L(\bZp)$
of the Levi factor $\bar r$,
such that
the adoint representation
$
\phi^{\Lie}(r^\circ):
G_K\xrightarrow{r^{\circ}}
L(\bZp)
\to \GL(\Lie U(\bZp))
$
has labeled Hodge-Tate weights
slightly less that $\underline{0}$.

\begin{proof}
It is well-known Hodge-Tate regular crystalline lifts of $\bar r$ exists since $L\cong\GL_2$.
We have
$\phi^{\operatorname{Lie}}(r^\circ) = \frac{1}{{\det r^\circ}^2}\sym^3(r^\circ) \oplus
\frac{1}{\det r^\circ}$.
So by replacing $r^\circ$ by a Tate twist,
we can ensure $\phi^{\operatorname{Lie}}(r^\circ)$ has labeled Hodge-Tate weights
slightly less that $\underline{0}$.

\end{proof}

Let $\spec R$ be an irreducible component (with non-empty generic fiber)
of a crystalline lifting
ring $R^{\crys,\underline{\lambda}}_{\bar r}$
of regular labeled Hodge-Tate weights $\underline \lambda$
such that
the labeled Hodge-Tate weights $\phi^{\Lie}(\underline{\lambda})$ are
slightly less 0.
By the lemma above, such a $\spec R$ exists.

Let $r^{\univ}:G_K \to L(R)$
be the universal Galois representation.

The mod $\varpi$ Galois
representation $\bar r$ defines
a Galois action $\phi(\bar r):G_K\to \Aut(U(\bFp))$ on $U(\bFp)$.
By \ref{par:non-ab-h1},
the datum of $\bar \rho:G_K \to G_2(\bFp)$
is encoded in a non-abelian cocycle
$[\bar c]\in H^1(G_K, U(\bFp))$.

The strategy for lifting $\bar\rho$
is as follows.
We choose a suitable $\bZp$-point
$x$ of $\spec R$ which defines
a lift $r_x:G_K\to L(\bZp)$
of $\bar r$, and endow $U(\bZp)$
with the Galois action
$\phi(r_x):G_K\xrightarrow{r_x}L(\bZp)
\to \Aut(U(\bZp))$.
There is a map of pointed set
$
H^1(G_K, U(\bZp)) \to H^1(G_K, U(\bFp))
$.
If the cohomology class $[\bar c]$
admits a lift $[c]\in H^1(G_K, U(\bZp))$,
then $\bar\rho$ admits
a lift $\rho:G_K\to G_2(\bZp)$ whose datum is encoded in $[c]$.
Such a lift $\rho$ is crystalline by the main result of \cite{L21},
since $\phi^{\Lie}(r^\circ)$ has labeled Hodge-Tate weights
slightly less than $\underline{0}$.

By Theorem \ref{thm:heisenberg-lift},
to lift the non-abelian 1-cocycle $[\bar c]$,
it suffices to verify the following:
\begin{itemize}
\item[ {[1]} ]
$H^2(G_K, \sym^3(r^{\univ})/\det^2(r^{\univ}))$ is SGR and supported on the special fiber of $\spec R$;
\item[ {[2]} ] $p\ne 2$;
\item[ {[3]} ]
There exists a 
finite Galois extension $K'/K$ of prime-to-$p$ degree
such that
$\phi(\bar r)|_{G_{K'}}$
is Lyndon-Demu\v{s}kin; and
\item[ {[4]} ]
There exists a $\bZp$-point
of $\spec R$ which is mildly regular
when restricted to $G_{K'}$.
\end{itemize}
\vspace{3mm}

[1] is verified by Theorem \ref{thm:codim-ad}.
Note that since the Hodge type of $\spec R$ is chosen so that
$\sym^3(r_x)/\det(r_x)^2$
has labeled Hodge-Tate weights
slightly less than $\underline{0}$,
$H^2(G_K, \sym^3(r_x)/\det(r_x)^2)$
is torsion for any characteristic $0$
point $x$ of $\spec R$.
[3] follows from Lemma \ref{lem:K-primed},
and [4] follows from Proposition \ref{prop:G2-MR}.

\vspace{3mm}

\subsubsection{The long root parabolic case}
~
\vspace{3mm}

Let $Q\subset G_2$ be the long root parabolic.
$Q$ has a Levi decomposition
$Q=L'\ltimes V$.
Fix an isomorphism $\std:L'\xrightarrow{\cong}\GL_2$.
Write $\det$ for the composition
$L'\xrightarrow{\std} \GL_2 \xrightarrow{\det}\GL_1$.

Let $\{1\}=V_0\subset V_1\subset V_2\subset V_3=V$ be the upper central series of 
$V$.
Then the conjugation action of $L'$
on each graded piece is given by
\begin{itemize}
\item $V_3/V_2 \cong \det\otimes \std$;
\item $V_2/V_1 \cong \det$;
\item $V_1 \cong \std$.
\end{itemize}

Suppose $\bar\rho$ factors through the
long root parabolic $Q$, but not
the short root parabolic $P$.
Then the Levi factor
$$
\bar r: G_K\xrightarrow{\bar\rho}Q(\bFp)
\to L'(\bFp)
$$
is necessarily an irreducible representation.
If we endow each graded piece of $V(\bFp)$
with the Galois action 
$G_K\xrightarrow{\bar r}L(\bZp)\to \GL(V_{i+1}(\bFp)/V_i(\bFp))$, then we have,
by local Tate duality,
$$
H^2(G_K, V_3(\bFp)/V_2(\bFp))
=H^2(G_K, \bar r \otimes \det \bar r)=0
$$
$$
H^2(G_K, V_1(\bFp))
=H^2(G_K, \bar r)=0
$$
So the only cohomological obstruction
occurs in the second graded piece.

The datum of $\bar \rho$
is encoded in a non-abelian cocycle
$[\bar c]\in H^1(G_K, V(\bFp))$.
Just as is done in the short root parabolic case, it suffices to lift the cocycle
$[\bar c]$.
By Proposition \ref{prop:unob-lift},
since the only cohomological obstruction
lies in the second graded piece, it suffices to lift
$\ad([\bar c]) \in H^1(G_K, (V/V_1)(\bFp))$.

Write $U'$ for $V/V_1$.
Recall that $U'$ is a unipotent group of class $2$
with rank-$1$ center, and we can directly
appeal to Theorem \ref{thm:heisenberg-lift}.
We repeat the procedure worked out
in the short root case \ref{case:short-root}.

Let $r^\circ$ be a lift of $\bar r$ such that $r^\circ$
is Hodge-Tate regular and crystalline and the Hodge-Tate weights of $r^\circ$
are strictly less than $\underline{0}$.

Let $\spec R$ be the irreducible component of the crystalline lifting ring
of $\bar r$ containing $r^\circ$.
Write $r^{\univ}:G_K \to \GL_2(R)$ for the universal family.

Write $Z(U')$ for the center of $U'$, and
write $U^{'\ad}$ for $U'/Z(U')$.
Write $\phi^{\ad}$ for the conjugate action $L'\to \Aut(U^{'\ad})$
and write $\phi^{z}$ for the conjugate action $L' \to \Aut(Z(U'))$.

Note that $\phi^{\ad}(r^{\univ}) = r^{\univ}$
and $\phi^{z}(r^{\univ}) = \det r^{\univ}$.

We have the following check list:
\begin{itemize}
\item[ {[1]} ]
$H^2(G_K, \det(r^{\univ}) r^{\univ})$ is SGR;
\item[ {[2]} ] $p\ne 2$;
\item[ {[3]} ]
There exists a 
finite Galois extension $K'/K$ of prime-tp-$p$ degree
such that
$\phi(\bar r)|_{G_{K'}}$
is Lyndon-Demu\v{s}kin; and
\item[ {[4]} ]
There exists a $\bZp$-point
of $\spec R$ which is mildly regular
when restricted to $G_{K'}$.
\end{itemize}
By the assumption $H^2(G_K, \det(r^{\univ}) r^{\univ})=0$.
[3] follows from Lemma \ref{lem:K-primed},
and [4] follows from Proposition \ref{prop:G2-MR}.
\end{proof}

\vspace{3mm}

\begin{appendix}

\section{Non-denegeracy of mod $\varpi$ cup product
for $G_2$}
\label{sec:input-i}

\label{sec:delta}

Let $\bF$ be a finite field of characteristic $p>3$.
Write $G_2$ for the Chevalley group over $\bF$ of type $G_2$.

Let $P$ be the short root parabolic of $G_2$.
Let $P=L\ltimes U$ be the Levi decomposition.
Let  $\bar r:G_K\to L(\bF)$ be a Galois representation which is Lyndon-Demu\v{s}kin.
Since $L\cong \GL_2$, $\bar r$ is the extension of two trivial characters.

Denote by $\phi:L\to \Aut(U)$ the conjugation action.

$G_K$ acts on $U$ via the conjugate
action $G_K \xrightarrow{r^{\circ}} L\xrightarrow{\phi} \Aut(U)$.

\vspace{3mm}
We set up a computational framework
to prove various claims.
Let $\{x_0, \cdots, x_n, x_{n+1}\}$
be the Demu\v{s}kin generators.

Let $\{e_1
,e_2
\}$
be a basis of the representation space of $\bar r$
such that $r^{\circ}$
is upper-triangular with respect to this basis.
Without loss of generality, assume
$e_1=
\begin{bmatrix}
1 \\ 0
\end{bmatrix}
,e_2 = 
\begin{bmatrix}
0 \\ 1
\end{bmatrix}
$.
Say
for $i=0,\cdots,n+1$,
$
\bar r(x_i)
=
\begin{bmatrix}
1 & l_i \\
& 1
\end{bmatrix}
$.

The set $\{e_1^3,e_1^2e_2,e_1e_2^2,e_2^3\}$
is a basis of the representation space $\sym^3(\bar r)$,
which is identified with $U^{\ad}(\bF)$.

The root system of $G_2$ can be found in Subsection \ref{ss:G2}.
In the diagram, $\alpha$ is the short root, and $\beta$ is the short root.
Each root $x$ generates a root group $U_x\subset U$.
The short root parabolic $P$ has $7$ root
groups: the $5$ root groups 
$$\{U_{\beta}, U_{\beta+\alpha}, U_{\beta+2\alpha}, U_{\beta+3\alpha}, U_{2\beta+3\alpha}\}$$ lying above
the $x$-axis generates the unipotent radical
$U$,
the two root groups $\{U_{\alpha}, U_{-\alpha}\}$ lying
on the $x$-axis are the root groups of the Levi factor group $L$.
Say
under the identification
$\std: L\cong \GL_2$, the matrices
$
\begin{bmatrix}
0 & * \\ 0 & 0
\end{bmatrix}$
are identified with the root group $U_{\alpha}$.
Now that we have identifications
\begin{eqnarray}
\Span e_1^3 &\sim& U_{\beta} \nonumber\\
\Span e_1^2 e_2 &\sim& U_{\beta+\alpha}\nonumber\\
\Span e_1 e_2^2 &\sim& U_{\beta+2\alpha}\nonumber\\
\Span e_2^3 &\sim& U_{\beta+3\alpha}\nonumber
\end{eqnarray}

For ease of notation, write
$E_0:= e_1^3$, 
$E_1:= e_1^2e_2$,
$E_2:= e_1e_2^2$,
$E_3:= e_2^3$.
A basis of 
$$C^1_{\LD}(U^{\ad}(\cO_E))\cong
\{\langle x_0, \cdots, x_{n+1} \rangle
\to 
U_{\beta}(\cO_E)
\oplus
U_{\beta+\alpha}(\cO_E)
\oplus
U_{\beta+2\alpha}(\cO_E)
\oplus
U_{\beta+3\alpha}(\cO_E)\}
$$ is 
given by
$$
\mathscr{B}=
\left\{
\begin{matrix}
x_0^*E_0 ,x_1^*E_0 , &\dots, &x_{n+1}^* E_0,\\
x_0^*E_1 ,x_1^*E_1 , &\dots, &x_{n+1}^* E_1,\\
x_0^*E_2 ,x_1^*E_2 , &\dots, &x_{n+1}^* E_2,\\
x_0^*E_3 ,x_1^*E_3 ,& \dots,& x_{n+1}^* E_3
\end{matrix}
\right\}
$$
where $x_i^*E_j$
is the cochain 
$c:\langle x_0, \cdots, x_{n+1}\rangle$
such that $c(x_k)=\delta_{ik}E_j$,
where $\delta_{ik}$ is the Kronecker delta.
For any $c\in C^1_{\LD}(U^{\ad})$,
we can write down the $\mathscr{B}$-coordinates 
$[c]_{\mathscr{B}} := (c_v)_{v\in\mathscr{B}}$ of $c$.

\vspace{3mm}

\subsubsection{Lemma}
\label{lem:cup-C-nondeg}
The cup products on cochains
$$
\cup_{\bF}:
C^1_{\LD}(U^{\ad}(\bF))
\times
C^1_{\LD}(U^{\ad}(\bF))
\to C^2_{\LD}(Z(U)(\bF))
$$
is non-degenerate.

\noindent
\textbf{Ideas} \hspace{3mm}
We compute the cup products 
$v\cup w$
for $v,w\in\mathscr{B}$.
The matrix $[\cup_{\bF}]_{\mathscr{B}}$
is anti-lower-triangular, (that is,
of the shape
$$
\begin{bmatrix}
0 & 0 & 0 &* \\
0 & 0 & * &* \\
0 & * & * & * \\
* & * & * & *
\end{bmatrix}
$$
whose anti-diagonal blocks are
constant invertible matrices
), and thus non-degenerate.

To help the reader better understand what's
going on,
we attached SageMath code in the Appendix \ref{sec:sage}.

\begin{proof}
Recall the relator of the Lyndon-Demu\v{s}kin group is
$$
R=\DemRel.
$$
Since we are working mod $\varpi$,
we have for any $p>5$, any $g\in G_{K'}$,
$\phi(\bar r(g))^p \equiv \Id$ mod $\varpi$ (See Appendix \ref{sec:sage} for the verification).
In particular, the relator $R$ reduces to
$$
(x_0, x_1)\dots(x_{n}, x_{n+1})
$$
when we compute mod $\varpi$.
(When $p=5$, things are still good,
and can be confirmed by running
the SageMath code in the appendix.)

We regard cochains in
$C^1_{\LD}(U^{\ad}(\bF))$
as a $(U^{\ad}(\bF))$-valued function
on the free group with generators
$\{x_0, \dots, x_{n+1}\}$,

Now we let $c$ be the ``universal''
mod $\varpi$
1-cochain.
That is, we let 
$$
\left\{
\begin{matrix}
\lambda_{0,0}, &\lambda_{1,0} , &\dots, &\lambda_{n+1,0},\\
\lambda_{0,1}, &\lambda_{1,1} , &\dots, &\lambda_{n+1,1},\\
\lambda_{0,2}, &\lambda_{1,2} , &\dots, &\lambda_{n+1,2},\\
\lambda_{0,3}, &\lambda_{1,3} , &\dots, &\lambda_{n+1,3}
\end{matrix}
\right\}
$$
be indeterminants, and set
$$c:= \sum \lambda_{i,j}x_i^*E_j\in
C^1_{\LD}(U^{\ad}(\bF))\otimes \bZ[\lambda_{i,j}].$$

The cup product
$$
c\cup c = Q(c) \in C^2_{\LD}(Z(U)(\bF))\otimes \bZ[\lambda_{i,j}] = Z(U)(\bF)\otimes \bZ[\lambda_{i,j}] \cong \bF[\lambda_{i,j}]
$$ will be a quadratic form
in variables $\{\lambda_{i,j}\}$,
and the matrix of this quadratic form
is nothing but the matrix
$[\cup_{\bF}]_{\mathscr{B}}$.
Recall that $c\cup c =Q(c)$
is defined to be
the projection of $\widetilde{c}(R)$ onto
the center of the Lie algebra $\Lie U$,
where
$\widetilde{c}\in C^1_{\LD}(U(\bF))$
is the unique extension of $c$ to
a $U(\bF)$-valued cochain
as is explained in Section \ref{sec:LD}.

Write $[\cup_{\bF}]_{\mathscr{B}}$ as
a block matrix
$$
\renewcommand{\kbldelim}{(}
\renewcommand{\kbrdelim}{)}
[\cup_{\bF}]_{\mathscr{B}}=
\kbordermatrix{
    & \beta &\omit& \beta+\alpha &\omit&\beta+2\alpha &\omit&\beta+3\alpha\\
\beta &M_{11}&\omit\vrule& M_{12} &\omit\vrule& M_{13} &\omit\vrule& M_{14} \\
\cline{2-8}
\beta+\alpha&M_{21} &\omit\vrule& M_{22} &\omit\vrule& M_{23} &\omit\vrule& M_{24} \\
\cline{2-8}
\beta+2\alpha&M_{31} &\omit\vrule& M_{32} &\omit\vrule& M_{33} &\omit\vrule& M_{34} \\
\cline{2-8}
\beta+3\alpha&M_{41} &\omit\vrule& M_{42} &\omit\vrule& M_{43} &\omit\vrule& M_{44}
}
$$
where each $M_{ij}$ is an $(n+2)\times(n+2)$ matrix.
We say the blocks $M_{24}$, $M_{33}$,
$M_{34}$, $M_{42}$, $M_{43}$, $M_{44}$
are \textit{strictly below the anti-diagonal},
and we call $M_{41}$, $M_{32}$, $M_{23}$
and $M_{14}$ the anti-diagonal blocks.
\begin{figure}[h!]
\begin{minipage}{.49\linewidth}
\centering
$$
\renewcommand{\kbldelim}{(}
\renewcommand{\kbrdelim}{)}
\kbordermatrix{
    & \beta &\omit& \beta+\alpha &\omit&\beta+2\alpha &\omit&\beta+3\alpha\\
\beta &&\omit\vrule&  &\omit\vrule&  &\omit\vrule&  \\
\cline{2-8}
\beta+\alpha& &\omit\vrule&  &\omit\vrule&  &\omit\vrule& M_{24} \\
\cline{2-8}
\beta+2\alpha& &\omit\vrule&  &\omit\vrule& M_{33} &\omit\vrule& M_{34} \\
\cline{2-8}
\beta+3\alpha& &\omit\vrule& M_{42} &\omit\vrule& M_{43} &\omit\vrule& M_{44}
}
$$
\caption{Strictly below anti-diagonal}
\end{minipage}
\begin{minipage}{.45\linewidth}
$$
\renewcommand{\kbldelim}{(}
\renewcommand{\kbrdelim}{)}
\kbordermatrix{
    & \beta &\omit& \beta+\alpha &\omit&\beta+2\alpha &\omit&\beta+3\alpha\\
\beta &&\omit\vrule&  &\omit\vrule&  &\omit\vrule& M_{14} \\
\cline{2-8}
\beta+\alpha& &\omit\vrule&  &\omit\vrule& M_{23} &\omit\vrule&  \\
\cline{2-8}
\beta+2\alpha& &\omit\vrule& M_{32} &\omit\vrule&  &\omit\vrule&  \\
\cline{2-8}
\beta+3\alpha&M_{41} &\omit\vrule&  &\omit\vrule&  &\omit\vrule& 
}
$$
\caption{Anti-diagonal blocks}
\end{minipage}
\end{figure}

\textbf{Sublemma}
Let $g=g_1g_2\dots g_s$.
Write $\phi_i$ for $\phi(\bar r(g_1,\dots,g_{i-1}))$.
We have 
$$\widetilde{c}(g) = \sum \phi_i \widetilde{c}(g_i)
+ \frac{1}{2}\sum_{i<j}[\phi_i \widetilde{c}(g_i), \phi_j \widetilde{c}(g_j)].$$

\begin{quotation}
\begin{proof}
An immediate consequence of the
Baker–Campbell–Hausdorff formula.
\end{proof}
\end{quotation}

\vspace{3mm}

Note that
$\phi(\bar {r}((x_i, x_j)) = \Id$,
so
\begin{eqnarray}
\widetilde{c}(R) &=& \widetilde{c}(\DemRel)
\nonumber\\
&=&\sum \widetilde{c}((x_{2k},x_{2k+1}))
+ \frac{1}{2}\sum_{j<k} 
[\widetilde{c}((x_{2j}, x_{2j+1}), \widetilde{c}( (x_{2k}, x_{2k+1})]\nonumber
\end{eqnarray}
We have 
$$
\widetilde{c}( (x_{2k}, x_{2k+1}))
= -\phi(x_{2k}^{-1})(\phi(x_{2k+1})-1)\widetilde{c}(x_{2k})
+ \phi(x_{2k}^{-1}x_{2k+1}^{-1})(\phi(x_{2k})-1)\widetilde{c}(x_{2k+1})
+ Z_{k}
=Y_k+ Z_k
$$
where $Z_k$ is a sum of Lie brackets
(see below), and
lies in the center of the $\Lie U$.
Note that 
$[Y_j, Y_k]$ only contributes to the part
of $[\cup_{\bF}]_{\mathscr{B}}$
which lies strictly below the anti-diagonal,
because 
$(\phi(x_{2k})-1)$ and $(\phi(x_{2k+1})-1)$
moved the appearance of the inderterminant
$\lambda_{i,j}$
from the root group $U_{\beta+j\alpha}$
to the root group $U_{\beta+(j+1)\alpha}$.

So it remains to analyze
$\sum Z_k$.
We have
\begin{eqnarray}
2 Z_k &=& 
[-\phi(x_{2k}^{-1}) \widetilde{c}(x_{2k}),
-\phi(x_{2k}^{-1}x_{2k+1}^{-1})\widetilde{c}(x_{2k+1})] \nonumber
\\&+&
[-\phi(x_{2k}^{-1}) \widetilde{c}(x_{2k}),
+\phi(x_{2k}^{-1}x_{2k+1}^{-1})\widetilde{c}(x_{2k})]\nonumber\\
&+&
[-\phi(x_{2k}^{-1}) \widetilde{c}(x_{2k}),
+\phi(x_{2k}^{-1}x_{2k+1}^{-1}x_{2k})\widetilde{c}(x_{2k+1})] \nonumber\\
&+&
[-\phi(x_{2k}^{-1}x_{2k+1}^{-1}) \widetilde{c}(x_{2k+1}),
+\phi(x_{2k}^{-1}x_{2k+1}^{-1})\widetilde{c}(x_{2k})]\nonumber\\
&+&
[-\phi(x_{2k}^{-1}x_{2k+1}^{-1}) \widetilde{c}(x_{2k+1}),
+\phi(x_{2k}^{-1}x_{2k+1}^{-1}x_{2k})\widetilde{c}(x_{2k+1})] \nonumber\\
&+&
[\phi(x_{2k}^{-1}x_{2k+1}^{-1}) \widetilde{c}(x_{2k}),
+\phi(x_{2k}^{-1}x_{2k+1}^{-1}x_{2k})\widetilde{c}(x_{2k+1})] \nonumber
\end{eqnarray}
Write
\begin{eqnarray}
2Z_k' &:=&
[-\widetilde{c}(x_{2k}), -\widetilde{c}(x_{2k+1})]\nonumber\\
&+&
[-\widetilde{c}(x_{2k}), \widetilde{c}(x_{2k})]\nonumber\\
&+&
[-\widetilde{c}(x_{2k}), \widetilde{c}(x_{2k+1})]\nonumber\\
&+&
[-\widetilde{c}(x_{2k+1}), \widetilde{c}(x_{2k})]\nonumber\\
&+&
[-\widetilde{c}(x_{2k+1}), \widetilde{c}(x_{2k+1})]\nonumber\\
&+&
[\widetilde{c}(x_{2k}), \widetilde{c}(x_{2k+1})]\nonumber
\end{eqnarray}
$Z_k'$ is obtained by replacing
all Galois action in $Z_k$
by the trivial action.
$Z_k -Z_k'$ only contributes
to the part of $[\cup_{\bF}]_{\mathscr{B}}$
with lies strictly below the anti-diagonal
for a similar reason (a ``shifting'' effect).
It is easy to see that 
$$Z_k' = [\widetilde{c}(x_{2k}), \widetilde{c}(x_{2k+1})] = 
\pm\lambda_{2k,0}\lambda_{2k+1,3}
\pm\lambda_{2k+1,0}\lambda_{2k,3}
\pm 3\lambda_{2k,1}\lambda_{2k+1,2}
\pm 3\lambda_{2k+1,2}\lambda_{2k,1}.
$$
As a consequence of these computations,
we see that each of the anti-diagonal blocks of $[\cup_{\bf}]_{\mathscr{B}}$
are constant matrices:
$$
\pm M_{41}= \pm M_{14}=
\begin{bmatrix}
\begin{bmatrix}
& -1/2 \\
1/2 &
\end{bmatrix}
&
&
&\\
&
\begin{bmatrix}
& -1/2 \\
1/2 &
\end{bmatrix}
& & \\
& & \dots & \\
& & &
\begin{bmatrix}
& -1/2 \\
1/2 &
\end{bmatrix}
\end{bmatrix}
$$
and
$$
\pm M_{32}= \pm M_{23}=
\begin{bmatrix}
\begin{bmatrix}
& -3/2 \\
3/2 &
\end{bmatrix}
&
&
&\\
&
\begin{bmatrix}
& -3/2 \\
3/2 &
\end{bmatrix}
& & \\
& & \dots & \\
& & &
\begin{bmatrix}
& -1/2 \\
1/2 &
\end{bmatrix}
\end{bmatrix}.
$$

So $[\cup_{\bF}]_{\mathscr{B}}$
is an invertible matrix.
\end{proof}

The long root parabolic case is much simpler.
\section{Sagemath code}
\label{sec:sage}

\subsubsection{Proposition}
Let $V\subset B$ be the unipotent
radical of the Borel of $G_2$.
Let $g\in V(\bZp)$.
If $p > 5$, then $g^p = \Id$ mod $\varpi$.

\begin{proof}
Let $P \supset B$ be the short root parabolic.
Let $P = L \ltimes U$ be the Levi decomposition.
Let $\pi:P\to L$ be the quotient.
Say $\pi(g) = 
\begin{bmatrix}
1 & l \\
0 & 1
\end{bmatrix}
$.
Fix a projection $P\to U$.
Also fix a projection $U \to Z(U)$.
Say the projection of $g$ onto $U/Z(U) \cong \mathbb{A}^4$
via $P\to U\to U/Z(U)$ is 
$(u_0,u_1,u_2,u_3)$.
Say the projection of $g$ onto $Z(U)\cong \mathbb{A}^1$
via $P\to U\to Z(U)$
is $u_4$.

For simplicity, we write $g=(l;u_0,u_1,u_2,u_3;u_4)$.
We have, for any integer $q$,
\begin{eqnarray}
g^q&=&(q l; q u_0, -\frac{1}{2}q(q-1)u_0 l + q u_1, \nonumber\\
&&-\frac{1}{6}q(q-1)(2q-1)u_0 l^2 + q(q-1)u_1 l + q u_2,\nonumber\\
&&-\frac{1}{4}q^2(q-1)^2u_0l^3+\frac{1}{2}q(q-1)(2q-1)u_1l^2 
+\frac{3}{2}q(q-1)u_2 l + q u_3, q u_4;\nonumber\\
&&\frac{1}{120}(q-1)q(q+1)(3q^2-2)u_0^2l^3-\frac{1}{2}(q-1)q(q+1)(u_1^2+u_0u_2)l)\nonumber
\end{eqnarray}
It can be computed by hand, and can be verified by computer algebra system.
The Proposition follows from the above computation immediately.
\end{proof}

The SageMath source code for computing
is on the website \url{sharkoko.space}.


If we compute \hspace{2mm}
    \verb|cup_product_mod_p(5,4,4)|
\hspace{2mm}
in SageMath notebook,
we'll get an anti-lower-triangular matrix
in the sense of Lemma \ref{lem:cup-C-nondeg}.

\end{appendix}

\vspace{3mm}

\addcontentsline{toc}{section}{References}
\printbibliography

\end{document}